\newtheorem*{thm:CoprimeToOrderpreserving}{Theorem \ref{T:CoprimeToOrderpreserving}}   
\newtheorem{theorem}{Theorem}[section]
\newtheorem{lemma}[theorem]{Lemma}
\newtheorem{proposition}[theorem]{Proposition}
\newtheorem{definition}[theorem]{Definition}
\newtheorem{corollary}[theorem]{Corollary}
\theoremstyle{definition} 
\newtheorem{remark}[theorem]{Remark}
\newtheorem{example}[theorem]{Example}
\newcommand{\Z}{\mathbb{Z}}
\newtheoremstyle{cases}
  {12pt plus 6 pt}
  {2pt}
  {\bfseries}   
  {}
  {\bfseries}
  {.}
  {.5em}
  {}
\theoremstyle{cases}
\numberwithin{subcase}{case} \numberwithin{subsubcase}{subcase}
\numberwithin{equation}{subsection}
\title{Ordered bases, order-preserving automorphisms and bi-orderable link groups}
\begin{document}
\author[T.~Cai]{Tommy Wuxing Cai}

\address{Department of Mathematics, University of
	Manitoba, Winnipeg, MB, R3T 2N2}
\email{cait@myumanitoba.ca}

\author[A.~Clay]{Adam Clay}

\address{Department of Mathematics,, University of
	Manitoba, Winnipeg, MB, R3T 2N2}
\email{Adam.Clay@umanitoba.ca}

\author[D.~Rolfsen]{Dale Rolfsen}

\address{Department of Mathematics, University of British Columbia, Vancouver, BC, Canada V6T 1Z2}
\email{rolfsen@math.ubc.ca}

\thanks{Adam Clay was partially supported by NSERC grant RGPIN-05343-2020 }

 \subjclass[2010]{06F15, 20F60, 57M05, 57K30}

\begin{abstract} 
We give a new criterion which guarantees that a free group admits a bi-ordering that is invariant under a given automorphism.  As an application, we show that the fundamental group of the ``magic manifold" is bi-orderable, answering a question of Kin and Rolfsen.
\end{abstract}

\maketitle

\section{Introduction}

This work was motivated by a study of the Artin action \cite{Artin47} of the braid groups $B_n$ upon the free group $F_n$ of rank $n$, as in 
\cite{KR18}. In the course of our investigation, we introduce a new method which may be applied more generally to establish that an automorphism of a free group preserves some bi-ordering.  From this, we develop new methods of determining when a link $L$ in $S^3$, thought of as the closure of a braid together with the braid axis, yields a bi-orderable link group $\pi_1(S^3 \setminus L)$.  Some definitions are in order.

If the elements of a group $G$ can be given a strict total ordering $<$ with the property that $f < g$ implies $hf < hg$ for all $f, g, h \in G$, then we say $<$ is a \textbf{left-ordering} of $G$ and that $G$ is \textbf{left-orderable}. However if we also have
$f < g$ implies $fh < gh$ for all $f, g, h \in G$, then $<$ is called a 
\textbf{bi-ordering} of $G$, and $G$ is said to be \textbf{bi-orderable}.  For example, torsion-free abelian groups and also nonabelian free groups are bi-orderable.  
Let $\varphi:G \to G$ be an automorphism. If there is a bi-ordering $<$ of $G$ such that $f<g$ if and only if $\varphi(f) < \varphi(g)$ for all $f, g \in G$, we say that $\varphi$ is \textbf{order-preserving}, that  $<$ is \textbf{invariant} under $\varphi$, and that $\varphi$ \textbf{preserves} $<$. 

 Recall that if $\sigma:I \rightarrow I$ is a bijection, then the orbit of $x \in I$ under $\sigma$ is  $\mathcal{O} = \{ \sigma^n(x) \mid n \in \mathbb{Z} \}.$ We say that an orbit $\mathcal O$ of $\sigma$ is finite (respectively infinite) if the cardinality $|\mathcal O|$ is finite (respectively infinite).

Our main theorem for showing that an automorphism of a free group $\varphi :F \to F$ is order-preserving is the following. 

\begin{thm:CoprimeToOrderpreserving}
Let $F$ be the free group generated by $\{x_i \mid i\in I\}$ where $I$ is a nonempty set.  Let $\varphi : F \to F$ be an automorphism satisfying
\[ \varphi(x_i) = w_ix_{\sigma(i)}w_i^{-1}, \forall i \in I
\]
where $w_i \in F$ and $\sigma$ is a bijection on $I$. Assume that $\sigma(i_0) = i_0$ for some $i_0\in I$ and let $h:F\to \mathbb{Z}$ be the homomorphism defined by $h(x_{i_0}) = 1$ and $h(x_j) =0$ for all $j \neq i_0$. Assume that for each finite orbit $\mathcal O$ of $\sigma$ we have $$\gcd\Big(|\mathcal{O}|,\sum_{i \in \mathcal{O}} h(w_i)\Big)=1.$$ Then there exists a bi-ordering of $F$ invariant under $\varphi$.
\end{thm:CoprimeToOrderpreserving}

One source of automorphisms of $F_n$ having a formula as in Theorem \ref{T:CoprimeToOrderpreserving} is the Artin action of $B_n$ on $F_n$ \cite{Artin47}. The braid group $B_n$ is generated by $\sigma_1,\dotsc,\sigma_{n-1}$, subject to the relations:
\begin{align*}
&\sigma_i\sigma_{i+1}\sigma_i=\sigma_{i+1}\sigma_i\sigma_{i+1}, \text{ for } 1\leq i\leq n-2\\
&\sigma_i\sigma_j=\sigma_j\sigma_i, \text{ for }|i-j|>1.
\end{align*}

 There is an embedding of $B_n$ into the automorphism group of the free froup $F_n$ on generators $\{x_1, \dots, x_n\}$, which yields the Artin action of $B_n$ on $F_n$. The embedding sends each generator $\sigma_i$ of $B_n$ to the automorphism which acts on each generator $x_j$ of $F_n$ according to 
\begin{displaymath}
   x_j^{\sigma_i} = \left\{
     \begin{array}{ll}
        x_ix_{i+1}x_i^{-1} & \mbox{ if $j = i$} \\
       x_i & \mbox{ if $j = i+1$} \\
        x_j & \mbox{ if $j \neq i, i+1$.}
     \end{array}
   \right.
\end{displaymath} 


In \cite[Section 2.4]{KR18} the following was observed.  Consider the link 
$br(\beta) = \hat{\beta} \cup A$ in $S^3$, consisting of the braid closure $\hat{\beta}$ together with the braid axis $A$.
Then the Artin action of a braid $\beta \in B_n$ on $F_n$ yields an automorphism that preserves a bi-ordering of $F_n$ if and only if the fundamental group
$\pi_1(S^3 \setminus br(\beta))$ of the complement of the link $br(\beta)$ is bi-orderable.

An unanswered question in \cite{KR18} was whether the braid
$\beta = \sigma_1^2\sigma_2^{-1} \in B_3$ yields an automorphism that preserves a bi-ordering of $F_3$.  This is equivalent to the question of whether the fundamental group of the so-called ``magic manifold"
is bi-orderable; the magic manifold is a $3$-cusped hyperbolic $3$-manifold conjectured to be of minimal volume, see Figure \ref{fig:magic_link}.  We are able to answer this question in the affirmative using Theorem \ref{T:CoprimeToOrderpreserving} (see Proposition \ref{P:magic}).  We also generate many new examples of order-preserving braids, and therefore many bi-orderable link groups whose bi-orderability could not be determined with previously known techniques (e.g. see \cite{CR12, CDN16, KR18, ito17, yamada, JJ23, Johnson_2024}).  

\begin{figure}
    \centering
    \includegraphics[scale=2]{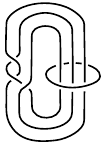}
    \caption{The link $br(\beta)$ for $\beta = \sigma_1^2\sigma_2^{-1}$, whose complement is homeomorphic to the magic manifold.}
    \label{fig:magic_link}
\end{figure}

\subsection{Organisation of the paper}

In Section \ref{S:orderedbases} we introduce generalisations of the ``positive eigenvalue" condition of \cite{PR03}, and use it to show certain automorphisms of infinitely generated abelian groups preserve a bi-ordering. In Section \ref{S:freegroups} we bootstrap this result to prove our main theorem. In Section \ref{section:links} we apply our main theorem to analyse the Artin action on $F_n$ and show that the fundamental group of the magic manifold is bi-orderable.

\subsection{Acknowledgments}

The second author would like to thank the CRM and CIRGET for their hospitality and for organizing the 2023 thematic semester on geometric group theory, during which the ideas of this paper first took shape. 

\section{Ordered bases, modules, and order-preserving homomorphisms}
\label{S:orderedbases}
Let $(S,<)$ be a set with a strict total ordering, and $f:S\rightarrow S$ a function. We say that $f$ preserves the ordering $<$ if $a<b$ implies that $f(a)<f(b)$ for all $a, b \in S$, in this case we say that $<$ is invariant under $f$, and that the function $f$ is order-preserving.  Note that our definition of order-preserving is therefore context-sensitive: A bijection $f:S \rightarrow S$ from a set $S$ to itself is order-preserving if there is a strict total ordering $<$ of $S$ preserved by $f$, whereas an automorphism $\varphi :G \rightarrow G$ of a group is order-preserving if there is a bi-ordering $<$ of $G$ preserved by $\varphi$.

In what follows, we let $R$ be the ring $\mathbb Z$, $\mathbb Q$ or $\mathbb R$, each equipped with its usual ordering. Given a free $R$-module $M$, an \textbf{ordered basis} for $M$ is a pair $ (\{v_i \mid i\in I\}, \prec)$ where $\{v_i \mid i\in I\}$ is a basis for the $R$-module $M$, and $\prec$ is a strict total ordering of the the index set $I$.

\begin{definition}\label{D:LexicographicOrderingAssociatedWithB}
Given a free $R$-module $M$ and an ordered basis $\mathcal{B} = (\{v_i  \mid  i\in I\}, \prec)$, the \textbf{lexicographic ordering of $M$ associated with $\mathcal{B}$} is the bi-ordering $<_{\mathcal{B}}$ of the abelian group $M$ defined as follows:
Given $\alpha=\sum_i a_iv_i \in M \setminus \{0\}$, set $i(\alpha) = \min_{\prec}\{ j\in I \mid a_j\neq0\}$ and define $0<_{\mathcal{B}} \alpha$ if only if $a_{i(\alpha)}>0$.
\end{definition}

 \begin{definition}\label{D:PTriangular}
 Let $M$ be a free $R$-module, $f:M\rightarrow M$ an $R$-module homomorphism, and $\mathcal{B} = (\{v_i \mid i\in I\}, \prec)$ an ordered basis. We say that $f$ is \textbf{positively triangular with respect to $\mathcal{B}$} if there is an order-preserving map $\eta:I\rightarrow I$ with respect to $\prec$ such that for each $i\in I$,
\begin{align*}
f(v_i)=\lambda_iv_{\eta(i)}+\sum_{j\succ\eta(i)}c_{i,j}v_j,
\end{align*}
where $\lambda_i, c_{i,j}\in R$ and $\lambda_i>0$.  We say that $f$ is \textbf{positively triangular} if there exists an ordered basis $\mathcal{B}$ such that $f$ is positively triangular with respect to $\mathcal{B}$.
\end{definition}

\begin{remark}\label{R:DefOfPositiveTriangular}Note that if $M$ is a finite rank $R$-module, say with ordered basis $\mathcal{B} = \{v_1, \ldots, v_n\}$ where the index set $\{1, \dots, n\}$ is equipped with its natural ordering, then $f : M \rightarrow M$ is positively triangular with respect to $\mathcal{B}$ if and only if $f$ is represented relative to $\mathcal B$ by an upper triangular matrix with positive entries on the diagonal.\end{remark}

\begin{lemma}\label{L:PositiveTriangularbi-ordering}
Let $M$ be a free $R$-module and $\mathcal{B} = (\{v_i \mid i\in I\}, \prec)$ an ordered basis for $M$.  If  $f:M\rightarrow M$ is an $R$-module homomorphism that is positively triangular with respect to $\mathcal{B}$, then $f$ preserves the bi-ordering $<_{\mathcal{B}}$.
\end{lemma}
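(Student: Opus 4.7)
My approach is to first reduce, by translation invariance of the bi-ordering together with $R$-linearity of $f$, to showing that $f$ carries positive elements to positive elements, and then to identify the leading term of $f(\alpha)$ explicitly via the positively triangular formula.

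The reduction is routine: since $<_{\mathcal{B}}$ is a bi-ordering of the abelian group $M$ and $f$ is an $R$-module homomorphism (hence a group homomorphism), the implication $\alpha <_{\mathcal{B}} \beta \Rightarrow f(\alpha) <_{\mathcal{B}} f(\beta)$ is equivalent, via $\gamma = \beta - \alpha$, to $\gamma >_{\mathcal{B}} 0 \Rightarrow f(\gamma) >_{\mathcal{B}} 0$. So I restrict attention to a single nonzero $\alpha = \sum_i a_i v_i$ with $0 <_{\mathcal{B}} \alpha$ and aim to show $0 <_{\mathcal{B}} f(\alpha)$.

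For this, set $i_0 = i(\alpha)$, so $a_{i_0} > 0$ by the definition of $<_{\mathcal{B}}$. Substituting the positively triangular formula into $f(\alpha) = \sum_{i} a_i f(v_i)$ gives
\[
f(\alpha) = \sum_{i} a_i\lambda_i v_{\eta(i)} \;+\; \sum_{i}\sum_{j \succ \eta(i)} a_i c_{i,j} v_j,
\]
where $i$ ranges over the (finite) support of $\alpha$. The claim I want to verify is that the $\prec$-minimum index appearing with nonzero coefficient in $f(\alpha)$ is $\eta(i_0)$, and that the coefficient there is exactly $a_{i_0}\lambda_{i_0}$ (which is positive). The key step is that because $\eta$ strictly preserves $\prec$, every $i$ in the support of $\alpha$ other than $i_0$ satisfies $\eta(i) \succ \eta(i_0)$, so every term arising from $a_i f(v_i)$ sits on an index strictly greater than $\eta(i_0)$; hence only $a_{i_0}\lambda_{i_0}$ contributes to the coefficient of $v_{\eta(i_0)}$, and no index $\prec \eta(i_0)$ appears anywhere in $f(\alpha)$.

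\textbf{Main obstacle.} The only real subtlety is the bookkeeping just described: one needs the strict monotonicity of $\eta$ both to rule out cancellation of the leading coefficient $a_{i_0}\lambda_{i_0}$ at $v_{\eta(i_0)}$ and to prevent any index $\prec \eta(i_0)$ from entering the support of $f(\alpha)$. Because the index set $I$ may be infinite, I would spell out that the images $\{\eta(i) : i \in \mathrm{supp}(\alpha)\}$ form a finite chain under $\prec$ with unique minimum $\eta(i_0)$, after which the argument is purely formal.
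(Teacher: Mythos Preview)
Your proposal is correct and follows essentially the same route as the paper: reduce to showing $\alpha>_{\mathcal B}0\Rightarrow f(\alpha)>_{\mathcal B}0$, expand $f(\alpha)$ via the positively triangular formula, and use the strict $\prec$-monotonicity of $\eta$ to see that the $\prec$-least index in $f(\alpha)$ is $\eta(i(\alpha))$ with coefficient $a_{i(\alpha)}\lambda_{i(\alpha)}>0$. The paper organises the bookkeeping by separating the $i(\alpha)$ summand first and then regrouping, but the substance is identical to what you wrote.
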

\begin{proof}
Suppose $f$ is positively triangular with respect to $\mathcal{B}$ and let $\prec$,  $\eta$ , $\lambda_i$ and  $c_{i,j}$ be as given in Definition \ref{D:PTriangular}.  Recall that if $\alpha=\sum_i a_iv_i \in M \setminus \{0\}$ then $i(\alpha) := \min_{\prec}\{ j\in I \mid a_j\neq0\}$ . To check that $<_{\mathcal{B}}$ is invariant under $f$, assume that $\alpha=\sum_i a_iv_i>_{\mathcal{B}}0$ and we want to prove that $f(\alpha)>_{\mathcal{B}}0$.  We have:
\begin{align*}
f(\alpha)&=a_{i(\alpha)}f(v_{i(\alpha)})+\sum_{j\succ i(\alpha)}a_jf(v_j)\\
&=a_{i(\alpha)}\left(\lambda_{i(\alpha)}v_{\eta(i(\alpha))}+
\sum_{j\succ\eta(i(\alpha))}c_{i(\alpha),j}v_j\right)\\
&\qquad \qquad +\sum_{j\succ i(\alpha)}a_j\left(\lambda_{j}v_{\eta(j)}+\sum_{k\succ \eta(j)}c_{j,k}v_k\right)\\        &=a_{i(\alpha)}\lambda_{i(\alpha)}v_{\eta(i(\alpha))}+\sum_{k\succ \eta(i(\alpha))}c_{\alpha,k}'v_k.
\end{align*}
To reach the last equality, we have regrouped terms to arrive at new coefficients $c_{\alpha,k}'$ for $k\succ \eta(i(\alpha))$, where we used the fact that $k\succeq\eta(j)$ and $j\succ i(\alpha)$ implies $k\succ\eta(i(\alpha))$, as $\succ$ is invariant under $\eta$.

Since both $a_{i(\alpha)}$ and $\lambda_{i(\alpha)}$ are positive, we conclude that $f(\alpha)>_{\mathcal{B}} 0$, as desired.
\end{proof}
\begin{definition}\label{D:TensorProductOfOrderedBases} Let $\mathcal{B} = (\{v_i  \mid  i\in I\}, \prec)$  and $\mathcal C=(\{w_j\mid j\in J\},\lhd)$  be ordered bases of $R$-modules of $M$ and $N$ respectively. We define the \textbf{tensor} $\mathcal B\otimes \mathcal C$  to be the ordered basis $(\{v_i\otimes w_j \mid (i,j)\in I\times J\},<)$ of $R$-module $M\otimes N$, where the total  ordering $<$ on $I\times J$ is defined lexicographically: $(i,j)<(s,t)$ if $i\prec s$ or $i=s$ and $j\lhd t$.
\end{definition}
\begin{lemma}\label{L:TensorOfPositivelyTrangularHom}
Let  $f:M\rightarrow M$ and $g:N\rightarrow N$ be $R$-module homomorphisms. Assume that $f$ and $g$ are positively triangular with respect to ordered bases $\mathcal B$  and $\mathcal C$ respectively. Then the $R$-module homomorphism $f\otimes g: M\otimes N\rightarrow M\otimes N$ is positively triangular with respect to $\mathcal B\otimes \mathcal C$. 
\end{lemma}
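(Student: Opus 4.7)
The plan is to expand $(f\otimes g)(v_i\otimes w_j)$ using the positively triangular expressions for $f(v_i)$ and $g(w_j)$, identify the leading term under the lexicographic ordering on $I\times J$, and verify the three conditions in Definition \ref{D:PTriangular}.

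Concretely, write $f(v_i)=\lambda_i v_{\eta(i)}+\sum_{k\succ\eta(i)}c_{i,k}v_k$ and $g(w_j)=\mu_j w_{\theta(j)}+\sum_{l\rhd\theta(j)}d_{j,l}w_l$, where $\eta$ and $\theta$ are the order-preserving maps furnished by the hypotheses on $f$ and $g$. Expanding the tensor product yields
\[
(f\otimes g)(v_i\otimes w_j)=\lambda_i\mu_j\,(v_{\eta(i)}\otimes w_{\theta(j)})+\Sigma,
\]
where $\Sigma$ is a sum of three families of terms: those with $v_{\eta(i)}\otimes w_l$ for $l\rhd\theta(j)$, those with $v_k\otimes w_{\theta(j)}$ for $k\succ\eta(i)$, and those with $v_k\otimes w_l$ for $k\succ\eta(i)$ and $l\rhd\theta(j)$. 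The candidate for the ``triangular'' map on the index set is $\zeta:I\times J\to I\times J$ given by $\zeta(i,j)=(\eta(i),\theta(j))$, and the candidate leading coefficient is $\lambda_i\mu_j$, which is positive.

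The remaining checks are routine and follow directly from the definition of the lexicographic ordering $<$ on $I\times J$. For each term $v_k\otimes w_l$ appearing in $\Sigma$, one of the three cases forces $(k,l)>(\eta(i),\theta(j))$: either $k=\eta(i)$ with $l\rhd\theta(j)$, or $k\succ\eta(i)$ (in which case $(k,l)>(\eta(i),\theta(j))$ regardless of $l$). Likewise, one verifies that $\zeta$ is order-preserving: if $(i,j)<(s,t)$, then either $i\prec s$, giving $\eta(i)\prec\eta(s)$ and hence $\zeta(i,j)<\zeta(s,t)$, or $i=s$ and $j\lhd t$, giving $\eta(i)=\eta(s)$ and $\theta(j)\lhd\theta(t)$, again yielding $\zeta(i,j)<\zeta(s,t)$.

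There is no real obstacle here; the proof is a direct bookkeeping argument. The only mild subtlety is keeping straight the three families of ``tail'' terms produced by the expansion and confirming each lies above the leading index $(\eta(i),\theta(j))$ in the lexicographic order---this is where the fact that $\prec$ is the \emph{primary} coordinate in the lexicographic order on $I\times J$ is essential, since tail terms coming from $f(v_i)$ alone (with $k\succ\eta(i)$) must dominate in the first coordinate regardless of what happens in the second.
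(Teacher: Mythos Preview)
Your proposal is correct and follows essentially the same approach as the paper's proof: both expand $(f\otimes g)(v_i\otimes w_j)$ directly, define the map on the index set as $(i,j)\mapsto(\eta(i),\theta(j))$, identify $\lambda_i\mu_j$ as the positive leading coefficient, and verify that every remaining term has index strictly greater than $(\eta(i),\theta(j))$. The only difference is that you spell out in full the verification that the product map preserves the lexicographic order and that each of the three families of tail terms lies above the leading index, whereas the paper leaves these as implicit or ``clearly'' steps.
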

\begin{proof} Let $\mathcal{B} = (\{v_i  \mid  i\in I\}, \prec)$  and $\mathcal C=(\{w_j\mid j\in J\},\lhd)$ .
For $f$, let  $\eta :I \rightarrow I $ be as in Definition \ref{D:PTriangular}, and for $g$, we will use the function $\xi: J \rightarrow J$.  Define the map $\sigma:I\times J\rightarrow I\times J$: $\sigma(i,j)=(\eta(i),\xi(j))$, which is clearly preserves the lexicographic ordering $<$ of $I \times J$.  Writing $u_{(i,j)}$ for $v_i\otimes w_j$,  we have
\begin{align*}
 &(f\otimes g)(u_{(i,j)})=(f\otimes g)(v_i\otimes w_j)=f(v_i)\otimes g(w_j)\\
=&\left(\lambda_iv_{\eta(i)}+\sum_{s\succ\eta(i)}a_{i,s}v_s\right)\otimes\left(\mu_jw_{\xi(j)}+\sum_{t\rhd\xi(j)}b_{j,t}w_t\right)\\
&=\lambda_i\mu_ju_{(\eta(i),\xi(j))}+\sum_{t\rhd\xi(j)}\lambda_ib_{j,t}u_{(\eta(i),t)}\\
&\qquad\qquad\qquad+\sum_{s\succ\eta(i)}a_{i,s}\mu_ju_{(s,\xi(j))}\\
&\qquad\qquad\qquad+\sum_{s\succ\eta(i),t\rhd\xi(j)}a_{i,s}b_{j,t}u_{(s,t)}\\
&=\lambda_i\mu_j u_{\sigma(i,j)}+\sum_{(s,t)>\sigma(i,j)}c_{i,j,s,t}u_{(s,t)},
\end{align*}
for some $c_{i,j,s,t} \in R$.  As $\lambda_i\mu_j >0$,  $f\otimes g$ is thus positively triangular with respect to $\mathcal B\otimes \mathcal C$. 
\end{proof}

\section{Order-preserving automorphisms of free groups}
\label{S:freegroups}

 We now aim to build upon the results of the last section, bootstrapping to yield order-preserving automorphisms of free groups of arbitrary rank.  
 
 Our first lemma is a generalisation of \cite[Lemma 4.5]{PR03}, wherein we confirm that the free group $F$ in that proof need not be finitely generated. The proof remains largely unchanged, with two exceptions: (1) the summations appearing below over the variable $j$ are sums of terms drawn from an infinite set (though each summation is still finite), and (2) the equation  \eqref{E:AbelianizationOfOneElementUsingDis} is demonstrated directly without referring to the Jacobian matrix as in \cite{PR03}.  Despite the changes being minor, we present the proof here for the sake of completeness, and so that the reader can confirm for themselves that the conclusion of the lemma is independent of the cardinality of the generating set of $F$.

Here is the setup for the lemma. Let $F$ be a free group generated by $\{z_i \mid i\in J\}$, where $J$ is not necessarily finite (unlike \cite[Lemma 4.5]{PR03}). Let $F_k$ be the $k$-th term of the lower central series of $F$, defined by $F_1=F$ and $F_{k+1} = [F_k, F]$.  Let $H$ be the abelianisation $F/[F,F]$ of $F$, written additively. Let $\mathbb ZF$ be the group ring of $F$ over $\mathbb Z$, and $\epsilon:\mathbb ZF\rightarrow \mathbb{Z}$  the homomorphism of rings defined by $\epsilon(\sum_{i=1}^nk_ig_i)=\sum_{i=1}^nk_i$ for $k_i\in\mathbb Z$ and $g_i\in F$. Set $I = \ker(\epsilon)$,  which is an ideal of the ring $\mathbb ZF$. For $g\in F$ and $k\in\{1,2,\dotsc\}$, we have $g\in F_k$ if and only if $g-1\in I^k$ \cite[First paragraph of Page 557]{Fox53}. Therefore, there is an injective homomorphism $\psi_k:F_k/F_{k+1}\rightarrow I^k/I^{k+1}$, sending $[g]$ to $[g-1]$, where the square brackets are to be interpreted as equivalence classes in the appropriate quotient. Note that a basis of  $I^k/I^{k+1}$ is $$\Big\{[(z_{j_1}-1)\dotsm(z_{j_k}-1)] \mid j_1,\dotsc,j_k\in J\Big\}.\footnote{This generates the group because of the Taylor expansion  formula for the Fox derivation. To prove that it is $\mathbb Z$-independent, we use the the fact that $D_{i_1}\dotsm D_{i_k}((z_{j_1}-1)\dotsm (z_{j_k-1}-1))\neq0$ if and only if $(i_1,\dotsc,i_k)=(j_1,\dotsc,j_k)$, which can easily be proved by induction.}$$  Hence we can define a $\mathbb Z$-module isomorphism $i_k:I^k/I^{k+1}\rightarrow H^{\otimes k}$ by $$i_k([(z_{j_1}-1)\dotsm(z_{j_k}-1)]) = a_{j_1}\otimes\dotsm\otimes a_{j_k},$$ where $a_j:=[z_j]$ is the image of $z_j$ in $H$. Moreover, there is an injective $\mathbb Z$-module homomorphism $j_k$ from $H^{\otimes k}$ to $(H\otimes\mathbb R)^k$, given by
$$j_k( a_1\otimes \dotsm \otimes a_k)= (a_1\otimes 1)\otimes\dotsm\otimes(a_k\otimes 1), \forall a_1,\dotsc,a_i\in H.$$

\begin{lemma}\label{L:Commutative}
For every automorphism $\varphi :F \rightarrow F$ and  every positive integer $k$, the following diagram commutes:
\begin{center}
 \begin{tikzpicture}[description/.style={fill=white,inner sep=2pt}]
\matrix (m) [matrix of math nodes, row sep=1em,
column sep=2em, text height=1.5ex, text depth=0.25ex]
{  F_k/F_{k+1}  & & I^k/I^{k+1}& &H^{\otimes k}&&(H\otimes\mathbb R)^{\otimes k}\\
& &&&\\
 F_k/F_{k+1}  & & I^k/I^{k+1}& &H^{\otimes k} &&(H\otimes\mathbb R)^{\otimes k}\\ };
\path[->,font=\scriptsize]
(m-1-1) edge node[auto] {$\psi_k$} (m-1-3);
\path[->,font=\scriptsize]
(m-1-3) edge node[auto] {$i_k$} (m-1-5);
\path[->,font=\scriptsize]
(m-1-5) edge node[auto] {$j_k$} (m-1-7);
\path[->,font=\scriptsize]
(m-1-1) edge node[auto] {$\varphi_k$} (m-3-1);
\path[->,font=\scriptsize]
(m-1-3) edge node[auto] {$\varphi_k'$} (m-3-3);
\path[->,font=\scriptsize]
(m-3-1) edge node[auto] {$\psi_k$} (m-3-3);
\path[->,font=\scriptsize]
(m-3-3) edge node[auto] {$i_k$} (m-3-5);
\path[->,font=\scriptsize]
(m-3-5) edge node[auto] {$j_k$} (m-3-7);
\path[->,font=\scriptsize]
(m-1-5) edge node[auto] {$\varphi_{ab}^{\otimes k}$} (m-3-5);
\path[->,font=\scriptsize]
(m-1-7) edge node[auto] {$(\varphi_{ab}\otimes Id)^{\otimes k}$} (m-3-7);
,\end{tikzpicture}
\end{center}
where the vertical maps are induced from $\varphi$.
\end{lemma}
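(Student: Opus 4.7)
The plan is to prove commutativity of the whole rectangle by verifying each of the three constituent squares, obtained by slicing the diagram vertically along $\psi_k$, $i_k$, and $j_k$. Throughout I write $\varphi_k'$ for the map on $I^k/I^{k+1}$ induced by the ring automorphism of $\mathbb{Z}F$ coming from $\varphi$, and $\varphi_{ab}$ for the induced map on $H$.

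The leftmost square (around $\psi_k$) is essentially tautological, since $\psi_k([g])=[g-1]$ and both maps by definition send $[g]$ to $[\varphi(g)-1]$. The rightmost square (around $j_k$) commutes because on a pure tensor $a_1\otimes\cdots\otimes a_k$, both compositions yield $(\varphi_{ab}(a_1)\otimes 1)\otimes\cdots\otimes(\varphi_{ab}(a_k)\otimes 1)$, since the identity on $\mathbb{R}$ fixes $1$. So the real work is in the middle square. I would evaluate both paths on a basis element $[(z_{j_1}-1)\cdots(z_{j_k}-1)]$ of $I^k/I^{k+1}$. The top-right route sends it to $a_{j_1}\otimes\cdots\otimes a_{j_k}$ and then to $\varphi_{ab}(a_{j_1})\otimes\cdots\otimes\varphi_{ab}(a_{j_k})$. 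The left-bottom route first applies $\varphi_k'$, giving $[(\varphi(z_{j_1})-1)\cdots(\varphi(z_{j_k})-1)]$, and the task is to identify this class modulo $I^{k+1}$.

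The key linearisation identity is that for any $g\in F$, if $[g]=\sum_\ell n_\ell\, a_\ell$ in $H$ (a finite sum), then
\[
g-1 \equiv \sum_\ell n_\ell (z_\ell-1) \pmod{I^2}.
\]
This follows by induction on word length from the two identities $uv-1=(u-1)+(v-1)+(u-1)(v-1)$ and $z^{-1}-1=-z^{-1}(z-1)\equiv -(z-1)\pmod{I^2}$. Applied to each $\varphi(z_{j_m})$ and multiplied out, only the terms obtained by selecting exactly one linear summand $n_{j_m,\ell_m}(z_{\ell_m}-1)$ from each of the $k$ factors survive modulo $I^{k+1}$; every other cross term lies in $I^s$ for some $s>k$. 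Hence
\[
(\varphi(z_{j_1})-1)\cdots(\varphi(z_{j_k})-1) \equiv \sum_{\ell_1,\dots,\ell_k}\Big(\prod_{m=1}^k n_{j_m,\ell_m}\Big)(z_{\ell_1}-1)\cdots(z_{\ell_k}-1) \pmod{I^{k+1}},
\]
and applying $i_k$ to the right-hand side returns exactly $\varphi_{ab}(a_{j_1})\otimes\cdots\otimes\varphi_{ab}(a_{j_k})$, matching the other path.

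I expect the middle square to be the main obstacle: one must argue carefully that all cross terms from expanding the $k$-fold product vanish in the quotient, a step performed in \cite{PR03} via the Jacobian matrix of Fox derivatives. Carrying this out by hand is legitimate even when $J$ is infinite, because the support of $[\varphi(z_j)]\in H$ is always finite, so each of the sums above has only finitely many nonzero terms. The final verification is then purely symbolic and independent of the cardinality of the generating set, which is precisely the strengthening required for Lemma \ref{L:Commutative} to apply in our setting.
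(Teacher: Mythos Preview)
Your proof is correct and follows essentially the same route as the paper: both dispose of the outer two squares as canonical, then verify the middle square on basis elements by linearising each factor $\varphi(z_{j_s})-1$ modulo $I^2$ and observing that cross terms land in $I^{k+1}$. The only cosmetic difference is that the paper packages the linearisation via Fox derivatives, writing $\varphi(z_{j_s})-1=\sum_j D_j^0(\varphi(z_{j_s}))(z_j-1)+O(2)$ and then noting $D_j^0(w)$ equals the exponent sum of $z_j$ in $w$, whereas you prove the same congruence directly by induction on word length---but the content is identical.
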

\begin{proof} It is easy to check that the first rectangle and the last rectangle are commutative, because all the maps involved are canonical.  Therefore we focus on commutativity of the middle rectangle.
Let $\alpha:=[(z_{j_1}-1)\dotsm(z_{j_k}-1)]\in I^k/I^{k+1}$, it is enough to prove that $i_k(\varphi_k'(\alpha))=\varphi_{ab}^{\otimes k}(i_k(\alpha))$ for all such $\alpha$.

First, $\varphi_k'(\alpha)=[(\varphi(z_{j_1})-1)\dotsm(\varphi(z_{j_k})-1)]$. Note that we have
\begin{align*}
\varphi(z_{j_s})-1=\sum_j D_j^0(\varphi(z_{j_s}))(z_j-1)+O(2),
\end{align*}
where $D_j=\frac\partial{\partial z_j}$ is the derivation defined in \cite{Fox53}, $D_j^0(w)=\epsilon(D_j(w))$, and $O(2)$ is a term in $I^2$.
We compute
\begin{align}\label{E:Commutativity1}
i_k(\varphi_k'(\alpha))&=i_k\left(\Big[\big(\sum_j D_j^0(\varphi(z_{j_1}))(z_j-1)\big)\dotsm \big(\sum_j D_i^0(\varphi(z_{j_k}))(z_j-1)\big)\Big]\right)\\\nonumber
&=\big(\sum_j D_j^0(\varphi(z_{j_1}))a_j\big)\otimes\dotsm \otimes\big(\sum_j D_j^0(\varphi(z_{j_k}))a_j\big)\\\nonumber
&=\Big[\sum_j D_j^0(\varphi(z_{j_1}))z_j\Big]\otimes\dotsm \otimes\Big[\sum_j D_j^0(\varphi(z_{j_k}))z_j\Big].
\end{align}
Second, we have
\begin{align}\label{E:Commutativity2}
\varphi_{ab}^{\otimes k}\left(i_k(\alpha)\right)&=\varphi_{ab}^{\otimes k}\left(a_{j_1}\otimes\dotsm\otimes a_{j_k}\right)\\\nonumber
&=\varphi_{ab}(a_{j_1})\otimes\dotsm\otimes\varphi_{ab}(a_{j_k})\\\nonumber
&=[\varphi(z_{j_1})]\otimes\dotsm \otimes [\varphi(z_{j_k})].
\end{align}
Comparing \eqref{E:Commutativity1} and \eqref{E:Commutativity2}, we only need to prove that for $w\in F$, we have
\begin{align}\label{E:AbelianizationOfOneElementUsingDis}
[w]=\left[\sum_j D_j^0(w) z_j\right]
 \end{align}
 in $H$. But this is true, because $D_j^0(w)$ is the sum of the exponents of $z_j$ that appear in $w$, and recall that we write the operation in $H$ additively. \end{proof}

We are now ready to prove the main result of this section, which by Remark \ref{R:DefOfPositiveTriangular}, is a generalisation of \cite[Theorem 2.6]{PR03} from finitely generated free groups to free groups of arbitrary rank. 
\begin{proposition}\label{P:PT-Invariantbi-order}
Let $F$ be a free group, $\varphi :F \rightarrow F$ an automorphism, set $H:=F/[F,F]$, and let $\varphi_{ab}:H \rightarrow H$ be the automorphism induced by $\varphi$. If $\varphi_{ab}\otimes Id : H\otimes \mathbb R \rightarrow H\otimes \mathbb R$ is positively triangular, then there is a bi-ordering $<$ on $F$ which is invariant under $\varphi$.
\end{proposition}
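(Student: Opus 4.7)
Proof plan. The strategy is the classical lower-central-series construction. Since $F$ is a free group, a theorem of Magnus says $\bigcap_{k\ge 1}F_k = \{e\}$, and each quotient $F_k/F_{k+1}$ is a free abelian group. So the plan is to equip each $F_k/F_{k+1}$ with a bi-ordering $<_k$ that is invariant under the induced automorphism $\varphi_k$, and then assemble these into a bi-ordering on $F$ in the standard way.

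To build $<_k$, iterate Lemma \ref{L:TensorOfPositivelyTrangularHom}: if $\mathcal B$ is an ordered $\mathbb R$-basis of $H\otimes\mathbb R$ with respect to which $\varphi_{ab}\otimes Id$ is positively triangular, then $(\varphi_{ab}\otimes Id)^{\otimes k}$ is positively triangular on $(H\otimes\mathbb R)^{\otimes k}$ with respect to $\mathcal B^{\otimes k}$. By Lemma \ref{L:PositiveTriangularbi-ordering}, it therefore preserves the lexicographic bi-ordering $<_{\mathcal B^{\otimes k}}$. Now consider the composition
\[
\theta_k := j_k\circ i_k\circ\psi_k : F_k/F_{k+1}\longrightarrow (H\otimes\mathbb R)^{\otimes k},
\]
which is injective since each of $\psi_k$, $i_k$, $j_k$ is. Pulling back $<_{\mathcal B^{\otimes k}}$ via $\theta_k$ yields a bi-ordering $<_k$ on the abelian group $F_k/F_{k+1}$. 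Lemma \ref{L:Commutative} says that $\theta_k$ intertwines $\varphi_k$ with $(\varphi_{ab}\otimes Id)^{\otimes k}$, so $\varphi_k$ preserves $<_k$.

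To assemble the pieces, define the positive cone
\[
P=\{\,g\in F\setminus\{e\} : g\in F_k\setminus F_{k+1}\text{ for some }k\text{ and }[g]>_k 0\text{ in }F_k/F_{k+1}\,\}.
\]
Since $\bigcap_k F_k=\{e\}$, every nontrivial $g$ has a well-defined "depth" $k$, so $F\setminus\{e\}=P\sqcup P^{-1}$. The multiplicative closure $P\cdot P\subseteq P$ is routine: if $g$ has depth $k$ and $h$ has depth $\ell$ with $k<\ell$, then $gh\equiv g\pmod{F_{k+1}}$; if $k=\ell$, then $[gh]=[g]+[h]$ in the bi-ordered abelian group $F_k/F_{k+1}$, and the sum of two positive elements is positive. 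Conjugation invariance $fPf^{-1}\subseteq P$ follows because $[fgf^{-1}]=[g]$ in $F_k/F_{k+1}$ (the commutator $[f,g]$ lies in $F_{k+1}$). Hence $P$ defines a bi-ordering $<$ on $F$.

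Finally, $\varphi$-invariance: $\varphi$ preserves the characteristic subgroups $F_k, F_{k+1}$, so if $g$ has depth $k$ then so does $\varphi(g)$, and the induced map on $F_k/F_{k+1}$ is $\varphi_k$, which preserves $<_k$ by the previous step. Therefore $g\in P$ iff $\varphi(g)\in P$, giving the desired invariant bi-ordering. The main technical obstacle is already handled by Lemma \ref{L:Commutative}, which guarantees that the positive triangularity of $\varphi_{ab}\otimes Id$ at the level of $H\otimes\mathbb R$ propagates correctly to each $F_k/F_{k+1}$; everything else is the standard verification that a graded bi-ordering on a residually nilpotent group gives a bi-ordering on the whole group.
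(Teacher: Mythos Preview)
Your proposal is correct and follows essentially the same route as the paper's proof: iterate Lemma~\ref{L:TensorOfPositivelyTrangularHom} to get positive triangularity of $(\varphi_{ab}\otimes Id)^{\otimes k}$, invoke Lemma~\ref{L:PositiveTriangularbi-ordering} for an invariant ordering on $(H\otimes\mathbb R)^{\otimes k}$, pull it back through $j_k\circ i_k\circ\psi_k$ using Lemma~\ref{L:Commutative}, and assemble the $<_k$ via residual nilpotence of $F$. Your write-up is in fact slightly more explicit than the paper's in verifying that the resulting positive cone $P$ really defines a bi-ordering (semigroup closure and conjugation invariance), which the paper leaves to the reader.
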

\begin{proof}
Let $k$ be an arbitrary positive integer. Let $F_k$ be the $k$-th term in the lower central series of $F$ and $\varphi_k:F_k/F_{k+1} \rightarrow F_k/F_{k+1}$ the automorphism induced by $\varphi$. By Lemma \ref{L:TensorOfPositivelyTrangularHom}, $(\varphi_{ab}\otimes Id)^{\otimes k}:(H\otimes \mathbb R)^{\otimes k} \rightarrow (H\otimes \mathbb R)^{\otimes k}$ is positively triangular. By Lemma \ref{L:PositiveTriangularbi-ordering}, there is a bi-ordering $<'_k$ of $(H\otimes \mathbb R)^{\otimes k}$ which is invariant under $(\varphi_{ab}\otimes Id)^{\otimes k}$.

By Lemma \ref{L:Commutative}, the ordering $<'_k$ can be used to define a bi-ordering $<_k$ of $F_k/F_{k+1}$ that is invariant under $\varphi_k$.  The ordering is defined by declaring that for $a,b\in F_k/F_{k+1}$, we have $a<_kb$ if and only if $j_k(i_k(\psi_k(a)))<'_kj_k(i_k(\psi_k(b)))$.

Finally, we use these orderings $<_k$ ($k=1,2,\dotsc$) to define a bi-ordering $<$ on $F$. It is known that $\cap_{k=1}^\infty F_k=\{1\}$, thus for $a\neq1$ in $F$, there is a largest $k$, denoted as $k(a)$,  such that $a\in F_k$. Then we define $a>1$ if and only if $[a]>_{k(a)} 1$ in $F_{k(a)}/F_{k(a)+1}$, where $[a]$ indicates the equivalence class  in the quotient $F_{k(a)}/F_{k(a)+1}$ which contains $a$. This defines a bi-ordering of $F$ and it is invariant under $\varphi$ by construction.
\end{proof}

 Proposition \ref{P:PT-Invariantbi-order} tells us that for automorphisms $\varphi$ of a free group $F$, a sufficient condition for $\varphi$ to be order-preserving  is that $\varphi_{ab} \otimes Id$ is positively triangular. However, this condition is not a necessary condition for $\varphi$ to be order-preserving.  We will see in Example \ref{E:OrderPreservingButNotPositivelyTriangular} that there exist homomorphisms $\varphi$ such that  $\varphi_{ab}\otimes Id$ is not positively triangular, but $\varphi$ is still order-preserving.

\begin{remark}\label{R:PositiveTriangulartorProposition-WhyTimesR}
Let $\varphi$ be an automorphism of a free group $F$ and set $H=F/[F,F]$.
If $\varphi_{ab}:H \rightarrow H$ is positively triangular, then $\varphi_{ab}\otimes Id : H\otimes\mathbb R \rightarrow H\otimes\mathbb R$ is positively triangular. However, the reverse is not true, and it is for this reason that we insist $\varphi_{ab}\otimes Id : H\otimes\mathbb R \rightarrow H\otimes\mathbb R$ be positively triangular in Proposition \ref{P:PT-Invariantbi-order}.

As an example of this behaviour, let $F$ be the free group generated by $x_1,x_2$. Let $\varphi$ be the automorphism of $F$ given by $$\varphi(x_1) =  x_1^2x_2, \mbox{ and } \varphi(x_2)=  x_1^2x_2x_1x_2.$$ Writing the abelian group $H=F/[F,F]$ additively and the cosets of $x_1$ and $x_2$ as $\overline{x_1}$ and $\overline{x_2}$ respectively, $\varphi_{ab}$ sends $\overline{x_1}$  to $2\overline{x_1}+\overline{x_2}$ and $\overline{x_2}$ to $3\overline{x_1}+2\overline{x_2}$ respectively. Then matrix of $\varphi\otimes Id : H \otimes \mathbb{R} \rightarrow H \otimes \mathbb{R}$ is therefore $A=\left[\begin{smallmatrix}2&3\\1&2\end{smallmatrix}\right]$, with eigenvalues $2\pm\sqrt3$. Hence $\varphi :H \rightarrow H$ is not positively triangular, whereas $\varphi_{ab}\otimes Id : H\otimes\mathbb R \rightarrow H\otimes\mathbb R$ is positively triangular, because the matrix $A$ is diagonalisable over $\mathbb R$. Note the map $\varphi$ is thus order-preserving by Proposition \ref{P:PT-Invariantbi-order}; we can explicitly define a bi-ordering on $H\otimes\mathbb R$ which is invariant under $\varphi_{ab}\otimes Id$: $\overline{x_1}\otimes a+\overline{x_2}\otimes b>0$ if and only if $a+b\sqrt3>0$ for  $a,b\in\mathbb R$.
\end{remark}
We can check that the construction of $<$ in the proof of Proposition \ref{P:PT-Invariantbi-order} doesn't depend on the homomorphism $\varphi$, and instead depends only on the underlying choice of ordered basis. In fact, the ordering is constructed as in our next definition, where \eqref{E:KeyInequality} involves Definition \ref{D:LexicographicOrderingAssociatedWithB}, Definition \ref{D:TensorProductOfOrderedBases} and the diagram in Lemma \ref{L:Commutative}. 
\begin{definition}\label{D:bi-orderingAssociatedWithOrderedBasis}
Let $F$ be a free group and $\mathcal{B}$ be an ordered basis of $(F/[F,F])\otimes\mathbb R$.
Let $<_k$ denote the ordering of $F_k/F_{k+1}$ defined by $a<_kb$ if and only if 
\begin{align}\label{E:KeyInequality}
j_k(i_k(\psi_k(a)))<_{\mathcal B^{\otimes k}} j_k(i_k(\psi_k(b))).
\end{align}
Let $<$ be the bi-ordering of $F$ induced by the orderings $<_k$ as in the proof of Proposition \ref{P:PT-Invariantbi-order}. We call $<$ the \textbf{bi-ordering of $F$ associated with $\mathcal B$} and denote it by $<_{\mathcal B}$.
\end{definition}

We can thus, upon reviewing the proof, give a version of Proposition \ref{P:PT-Invariantbi-order} that makes explicit the invariant bi-ordering.
\begin{theorem}\label{T:PT-Invariantbi-order}
Let $F$ be a free group and $\mathcal{B}$ an ordered basis of $(F/[F,F])\otimes \mathbb R$. Let $<_{\mathcal B}$ be the bi-ordering of $F$ associated with $\mathcal B$. If $\varphi: F\rightarrow F$ is an automorphism such that $$\varphi_{ab}\otimes Id:(F/[F,F])\otimes\mathbb R \rightarrow (F/[F,F])\otimes\mathbb R$$ is positively triangular with respect to $\mathcal{B}$, then $<_{\mathcal B}$ is invariant under $\varphi$. 
\end{theorem}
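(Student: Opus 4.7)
The plan is to simply retrace the proof of Proposition~\ref{P:PT-Invariantbi-order}, but keeping track of the specific ordered basis $\mathcal{B}$ throughout, so that the invariant bi-ordering produced coincides with the named bi-ordering $<_{\mathcal{B}}$ from Definition~\ref{D:bi-orderingAssociatedWithOrderedBasis}.

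First, I would fix an arbitrary positive integer $k$ and apply Lemma~\ref{L:TensorOfPositivelyTrangularHom} inductively to deduce that $(\varphi_{ab}\otimes Id)^{\otimes k}:(H\otimes\mathbb{R})^{\otimes k}\to(H\otimes\mathbb{R})^{\otimes k}$ is positively triangular with respect to the tensor-power ordered basis $\mathcal{B}^{\otimes k}$. Lemma~\ref{L:PositiveTriangularbi-ordering} then yields that the lexicographic bi-ordering $<_{\mathcal{B}^{\otimes k}}$ is invariant under $(\varphi_{ab}\otimes Id)^{\otimes k}$.

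Next, I would invoke Lemma~\ref{L:Commutative}, whose commuting diagram says that the composition $j_k\circ i_k\circ\psi_k:F_k/F_{k+1}\to (H\otimes\mathbb{R})^{\otimes k}$ intertwines the automorphism $\varphi_k$ induced by $\varphi$ on $F_k/F_{k+1}$ with $(\varphi_{ab}\otimes Id)^{\otimes k}$. Since this composition is injective, the pullback of $<_{\mathcal{B}^{\otimes k}}$ along it is precisely the bi-ordering $<_k$ on $F_k/F_{k+1}$ defined by \eqref{E:KeyInequality}, and invariance of $<_{\mathcal{B}^{\otimes k}}$ under $(\varphi_{ab}\otimes Id)^{\otimes k}$ therefore transfers to invariance of $<_k$ under $\varphi_k$.

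Finally, I would assemble these levelwise orderings into the bi-ordering $<_{\mathcal{B}}$ of $F$ exactly as in Proposition~\ref{P:PT-Invariantbi-order}: for $a\neq 1$ in $F$, using $\bigcap_{k\ge 1} F_k=\{1\}$ one defines $k(a)$ to be the largest integer with $a\in F_{k(a)}$, and declares $a>1$ iff $[a]>_{k(a)} 1$ in $F_{k(a)}/F_{k(a)+1}$. Because $\varphi$ preserves each term $F_k$ of the lower central series (automorphisms are characteristic), we have $k(\varphi(a))=k(a)$, and the invariance of each $<_k$ under $\varphi_k$ established above then immediately gives invariance of $<_{\mathcal{B}}$ under $\varphi$. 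There is no real obstacle here; the only thing to verify carefully is that the bi-ordering produced by the abstract argument of Proposition~\ref{P:PT-Invariantbi-order} is the same as the named $<_{\mathcal{B}}$ of Definition~\ref{D:bi-orderingAssociatedWithOrderedBasis}, which is true essentially by construction since both are defined by pulling back $<_{\mathcal{B}^{\otimes k}}$ along the same map $j_k\circ i_k\circ\psi_k$ on each graded piece.
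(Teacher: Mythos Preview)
Your proposal is correct and is essentially the same as the paper's approach: the paper does not give a separate proof of Theorem~\ref{T:PT-Invariantbi-order}, but rather presents it as an explicit restatement obtained by ``reviewing the proof'' of Proposition~\ref{P:PT-Invariantbi-order} and noting that the construction there depends only on the ordered basis $\mathcal{B}$, which is exactly what you have done.
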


As an application of Theorem \ref{T:PT-Invariantbi-order}, we are now ready to prove Theorem \ref{T:CoprimeToOrderpreserving}, our main result from the introduction. 


\begin{theorem}
\label{T:CoprimeToOrderpreserving}
Let $F$ be the free group generated by $\{x_i \mid i\in I\}$ where $I$ is a nonempty set.  Suppose that $\varphi : F \rightarrow F$ is an automorphism satisfying
\[ \varphi(x_i) = w_ix_{\sigma(i)}w_i^{-1}, \forall i\in I
\]
where $w_i \in F$ and $\sigma$ is a bijection on $I$. Assume that $\sigma$ fixes $i_0\in I$.  Define homomorphism $h:F\rightarrow \mathbb{Z}$ by $h(x_{i_0}) = 1$ and $h(x_j) =0$ for all $j \neq i_0$. Assume that for each finite orbit $\mathcal O$ of $\sigma$, we have $$\gcd\left(|\mathcal O|,\sum_{i\in\mathcal O}h(w_i)\right)=1.$$  Then there exists a bi-ordering of $F$ invariant under $\varphi$.
\end{theorem}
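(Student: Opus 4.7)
The plan is to apply Theorem~\ref{T:PT-Invariantbi-order} to the kernel $N := \ker(h) \trianglelefteq F$ and then extend the resulting bi-ordering of $N$ lexicographically to $F$ along the splitting $F \cong N \rtimes \langle x_{i_0}\rangle$; note that $h \circ \varphi = h$ (since $\sigma(i_0) = i_0$), so $\varphi$ preserves $N$. The Schreier--Reidemeister process gives a free basis of $N$ by $y_{j,k} := x_{i_0}^k x_j x_{i_0}^{-k}$ for $j \in I \setminus \{i_0\}$ and $k \in \mathbb{Z}$. Because the $F$-conjugation action on $N^{ab}$ factors through $F/N \cong \mathbb{Z}$, conjugation by any $g \in F$ shifts the $k$-coordinate of $\overline{y_{j,k}}$ by $h(g)$; a short computation using $\varphi(x_{i_0})^k = w_{i_0} x_{i_0}^k w_{i_0}^{-1}$ then shows that on $N^{ab}$ the abelianizations of $\varphi|_N$ and of conjugation by $x_{i_0}$ act on the basis $\{\overline{y_{j,k}}\}$ by the commuting basis-permutations
\[
\overline{y_{j,k}} \longmapsto \overline{y_{\sigma(j),\,k+h(w_j)}} \qquad\text{and}\qquad \overline{y_{j,k}} \longmapsto \overline{y_{j,\,k+1}}.
\]

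The heart of the argument is the orbit analysis of the resulting $\mathbb{Z}^2$-action on the index set $\{(j,k) : j \in I\setminus\{i_0\},\ k \in \mathbb{Z}\}$. For $j$ in a $\sigma$-orbit $\mathcal{O}$, the $\mathbb{Z}^2$-orbit of $(j,0)$ is all of $\mathcal{O}\times\mathbb{Z}$, with stabilizer equal to $\{0\}$ when $\mathcal{O}$ is infinite and to $\mathbb{Z}\cdot(|\mathcal{O}|,\,-s(\mathcal{O}))$ when $\mathcal{O}$ is finite, where $s(\mathcal{O}) := \sum_{i \in \mathcal{O}} h(w_i)$. The hypothesis $\gcd(|\mathcal{O}|,\,s(\mathcal{O})) = 1$ is exactly what forces this stabilizer to be a primitive rank-one sublattice, so every orbit is torsion-free as a $\mathbb{Z}^2$-set --- isomorphic to $\mathbb{Z}$ when $\mathcal{O}$ is finite and to $\mathbb{Z}^2$ when $\mathcal{O}$ is infinite --- and both permutations act on it by honest translations.

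I would then equip $N^{ab}\otimes \mathbb{R}$ with the ordered basis $\mathcal{B}$ obtained by fixing any total order on the set of $\mathbb{Z}^2$-orbits and, within each orbit, transporting the natural order on $\mathbb{Z}$ (respectively the lexicographic order on $\mathbb{Z}^2$) through the orbit--stabilizer identification. Any translation of a totally ordered abelian group is a strictly order-preserving bijection with trivial correction terms, so both abelianizations are positively triangular with respect to $\mathcal{B}$. Theorem~\ref{T:PT-Invariantbi-order} then produces a bi-ordering $<_{\mathcal{B}}$ of $N$ invariant under both $\varphi|_N$ and conjugation by $x_{i_0}$, and its lexicographic extension along $1 \to N \to F \to \mathbb{Z} \to 1$ yields a bi-ordering of $F$ that is $\varphi$-invariant: the projection of $\varphi$ to $F/N \cong \mathbb{Z}$ is the identity (by $h \circ \varphi = h$), and $\varphi|_N$ preserves $<_{\mathcal{B}}$. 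The main obstacle I expect is the orbit analysis in the previous paragraph: the coprimality hypothesis is precisely what rules out finite $\mathbb{Z}^2$-orbits, and any such finite orbit would destroy the possibility of a $\mathbb{Z}^2$-invariant total ordering.
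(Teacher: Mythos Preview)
Your proposal is correct and follows essentially the same route as the paper: pass to $N=\ker(h)$, use the Schreier basis $y_{j,k}=x_{i_0}^k x_j x_{i_0}^{-k}$, compute that $(\varphi|_N)_{ab}$ and conjugation by $x_{i_0}$ act on $\{\overline{y_{j,k}}\}$ by the commuting basis-permutations you wrote down, reindex each $\sigma$-orbit so that both act by translations, apply Theorem~\ref{T:PT-Invariantbi-order}, and extend lexicographically along $1\to N\to F\to\mathbb{Z}\to 1$. The only difference is packaging: where the paper writes down an explicit bijection $\mathbb{Z}\to\mathcal{O}\times\mathbb{Z}$ for each finite orbit using the coprimality hypothesis, you phrase the same step via the orbit--stabilizer theorem for the $\mathbb{Z}^2$-action, observing that $\gcd(|\mathcal{O}|,s(\mathcal{O}))=1$ makes $(|\mathcal{O}|,-s(\mathcal{O}))$ primitive and hence the quotient $\mathbb{Z}^2/\mathbb{Z}\cdot(|\mathcal{O}|,-s(\mathcal{O}))$ torsion-free; this is a cleaner way to say the same thing.
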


\begin{proof} Let $K$ be the kernel of $h$. We see that 
$\varphi(K)=K$ and thus the restriction $\varphi|_K$ is an automorphism of $K$. Let $\psi:F\rightarrow F$ be the conjugation $\alpha\rightarrow x_{i_0}\alpha x_{i_0}^{-1}$. Similarly, we can verify that $\psi|_K$ is an automorphism on $K$. We now proceed in two steps.

First, we assume that there is a bi-ordering $<_K$ on $K$ which is invariant under both $\varphi|_K$ and $\psi|_K$. Under this assumption, we now show that the lexicographic left-ordering on $F$ coming from the short exact sequence
$$1\rightarrow K\xrightarrow{i}F\xrightarrow{h} \Z\rightarrow \{0\}$$ is in fact a bi-ordering invariant under $\phi$.  Here, $i$ is the inclusion of $K$ in $F$. The positive cone of $F$ is defined to be
 $$P_{F}:=i(P_K)\cup h^{-1}(\mathbb Z_{>0})=P_K\cup \{\alpha\in F_{n} \mid h(\alpha)>0\},$$
 where $P_K$ is the positive cone of $K$ corresponding to $<_K$. To verify that it gives a bi-ordering, we only need to verify that $x_{i_0}^{k}P_Kx_{i_0}^{-k}=P_K$ for all $k\in\mathbb Z$. This is true because $x_{i_0}P_Kx_{i_0}^{-1}=\psi|_K(P_K)=P_K$ as $<_K$ is invariant under $\psi|_K$.
  To verify that it is invariant under $\varphi$, we want to show that $\varphi(P_{F})\subseteq P_{F}$. This is true, because $\varphi(P_K)=\varphi|_K(P_K)=P_K$ using that $<_K$ is invariant under $\varphi|_K$; and  if $\alpha\in F$ satisfies $h(\alpha)>0$, then have $h(\varphi(\alpha))=h(\alpha)>0$.

So, to prove the theorem it suffices to prove that there is a bi-ordering on $K$ which is invariant under both $\varphi|_{K}$ and $\psi|_{K}$, which is our second step in the proof.
    
    It can be shown that $K$ has a free basis: $$\left\{x_{i_0}^jx_ix_{i_0}^{-j}\mid i\in I\backslash\{i_0\}, j\in\mathbb Z\right\}.$$ For $\alpha\in K$, we write $\overline{\alpha}$ for the image of $\alpha$ in the quotient $H:=K/[K,K]$, which is the abelianisation of $K$. Then $H$ is a free abelian group and 
    \begin{align}\label{E:ABaseOfH}
        \mathfrak{B}:=\{A_{i,j} \mid i\in I\backslash\{i_0\}, j\in\mathbb Z\} \text{ with } A_{i,j}:=\overline{x_{i_0}^jx_ix_{i_0}^{-j}}
    \end{align} is a basis of $H$. By Theorem \ref{T:PT-Invariantbi-order} and the second sentence of Remark \ref{R:PositiveTriangulartorProposition-WhyTimesR}, we only need to prove that there is an ordered basis $\mathcal{B}$ of $H$, such that $(\varphi|_K)_{ab}$ and $(\psi|_K)_{ab}$ are positively triangular with respect to $\mathcal{B}$. We proceed by the following sub-steps:
 \begin{enumerate}
\item \underline{Claim:}  For the abelianisation $(\varphi|_K)_{ab}$ of $\varphi|_K$ on $H$, $i \in I$ and $j\in\mathbb Z$, we have
\begin{align}\label{E:phiKab}
(\varphi|_K)_{ab}(A_{i,j})=A_{\sigma(i),j+h(w_i)}.
\end{align}

\begin{proof}[Proof of claim:] Note that we have $\overline{x_{i_0}^r}A_{i,j}\overline{x_{i_0}^{-r}}=A_{i,j+r}$ and that for $k \neq i_0$ we have $\overline{x_k} = A_{k,0}$ and therefore $\overline{x_{k}^r}A_{i,j}\overline{x_{k}^{-r}}=A_{i,j}$ since $H$ is abelian.  Therefore for $\alpha\in F$, we have $\overline{\alpha} A_{i,j}\overline{\alpha^{-1}}=A_{i,j+h(\alpha)}$. Now for the abelianisation $(\varphi|_K)_{ab}$ of $\varphi|_K$ on $H$, $i\in I\backslash\{i_0\}$ and $j\in\mathbb Z$, we have
\begin{align*}
(\varphi|_K)_{ab}(A_{i,j})&=\overline{\varphi(x_{i_0}^jx_ix_{i_0}^{-j})}\\
&=\overline{(w_{i_0}x_{i_0}w_{i_0}^{-1})^jw_i}A_{\sigma(i),0}\overline{w_i^{-1}(w_{i_0}x_{i_0}w_{i_0}^{-1})^{-j}}\\
&=A_{\sigma(i),j+h(w_i)}.
\end{align*}
\end{proof}
\item For each infinite orbit $\mathcal O$ of $\sigma$, we define $V_{\mathcal O,t}\in H$ for $t\in\mathcal O\times \mathbb Z$ as following:
\begin{align*}
V_{\mathcal O,t}=V_{\mathcal O,(i,j)}=A_{i,j} \text{ for } t=(i,j) \in\mathcal O\times \mathbb Z.
\end{align*} 
We have, for $t=(i,j)\in\mathcal O\times \mathbb Z$, 
\begin{align}\label{E:psiabVOt}
(\psi|_K)_{ab}(V_{\mathcal O, t})&=(\psi|_K)_{ab}(A_{i,j})=A_{i,j+1}=V_{\mathcal O, (i,j+1)},\\
\label{E:phiabVOt}
(\varphi|_K)_{ab}(V_{\mathcal O, t})&=(\varphi|_K)_{ab}(A_{i,j})=A_{\sigma(i),j+h(w_i)}=V_{\mathcal O, (\sigma(i), j+h(w_i))}.
\end{align}
\item  For each finite orbit $\mathcal O$ which is not equal to the orbit $\{i_0\}$, we define $V_{\mathcal O,t}\in H$ for $t\in\mathbb Z$ as follows:


We fix a tuple $(k_1,\dotsc,k_r)$ such that $\mathcal O=\{k_1,\dotsc,k_r\}$ with $r=|\mathcal O|$, $\sigma(k_i)=k_{i+1}$ for $1\leq i\leq r-1$ and $\sigma(k_r)=k_1$. Let $h_{\mathcal O}=\sum_{i=1}^rh(w_{k_i})$, then by assumption $\gcd(h_{\mathcal O},r)=1$. Let $y_i=\sum_{1\leq j\leq i-1}-h(w_{k_j})$ for $i=1,2,\dotsc,r$. (Then we have, for example, $y_1=0$, $y_r-h(w_{k_r})=-h_{\mathcal O}$.) Since $\gcd(h_{\mathcal O},r)=1$, given  $t\in\mathbb Z$, there is a unique $i=i(t)\in\{1,2,\dotsc,r\}$ and a unique $j=j(t)\in\mathbb Z$ such that $t=r(y_i+j)+ih_{\mathcal O}$. Define a map $k:\mathbb Z\to\mathcal O$ by $k(t)=k_{i(t)}$. Now let
\begin{align}\label{E:DefinitionVOt}
V_{\mathcal O,t}=V_{\mathcal O,r(y_i+j)+ih_{\mathcal O}}:=A_{{k_i},j}=A_{{k_{i(t)},j(t)}}=A_{k(t),j(t)}\in H.
\end{align}
 Note that the map  $E_{\mathcal O}:\mathbb Z\to \mathcal O\times \mathbb Z$ given by  $t\mapsto (k(t),j(t))$ is a bijection.
 \item  \underline{Claim:} For a finite orbit $\mathcal O$ we have, for $t\in\mathbb Z$,
  \begin{align}\label{E:TheTranslationFormulaForPsi}
 &(\psi|_K)_{ab}(V_{\mathcal O,t})=V_{\mathcal O,t+|\mathcal O|}, \\ \label{E:TheTranslationFormula}
 &(\varphi|_K)_{ab}(V_{\mathcal O,t})=V_{\mathcal O,t+h_{\mathcal O}},
 \end{align} where $h_{\mathcal O}=\sum_{i\in\mathcal O}h(w_i)$. 

\begin{proof}[Proof of claim] 
Fix an arbitrary $t\in\mathbb Z$, we have $t=r(y_i+j)+ih_{\mathcal O}$ where $y_i$, $i=i(t)$ and $j=j(t)$ were defined in (3). By \eqref{E:DefinitionVOt}, we have $V_{\mathcal O,t}=A_{k_i,j}$. 

Thus we find
\begin{align*}
&(\psi|_K)_{ab}(V_{\mathcal O,t})=\psi(A_{k_i,j})=A_{k_i,j+1}=V_{\mathcal O,t+r}=V_{\mathcal O,t+|\mathcal O|},
\end{align*} proving \eqref{E:TheTranslationFormulaForPsi}.

To prove \eqref{E:TheTranslationFormula}, we note that
\begin{align}\label{E:PhiKab}
&(\varphi|_K)_{ab}(V_{\mathcal O,t})=(\varphi|_K)_{ab}(A_{k_i,j}).
\end{align}
We have the following, by \eqref{E:phiKab} and recalling the definition of $y_i$:
\begin{enumerate}
\item If $1\leq i(t)\leq r-1$, \eqref{E:PhiKab} is equal to
\begin{align*}
A_{k_{i+1},j+h(w_{k_i})}=V_{\mathcal O,r(j+h(w_{k_i})+y_{i+1})+(i+1)h_{\mathcal O}}=V_{\mathcal O,t+h_{\mathcal O}}
\end{align*}
as desired.

\item If $i(t)=r$, \eqref{E:PhiKab} is equal to
\begin{align*}
A_{k_{1},j+h(w_{k_r})}=V_{\mathcal O,r(j+h(w_{k_r})+y_1)+h_{\mathcal O}}=V_{\mathcal O,t+h_{\mathcal O}}
\end{align*} as desired.
\end{enumerate}
\end{proof}
\item Define an ordered basis of $H$.

Let $\mathbb O(\sigma)$ be the set of orbits of $\sigma$ which are not equal to $\{i_0\}$. For a finite orbit $\mathcal O$, we let $\mu(\mathcal O)=\mathbb Z$. For an infinite orbit $\mathcal O$, we let $\mu(\mathcal O)=\mathcal O\times \mathbb Z$. Then $$\mathfrak{B}':=\{V_{\mathcal O,t}|\mathcal{O} \in \mathbb O(\sigma), t\in\mu(\mathcal O)\}$$ is a basis of $H$. To see this, let us check that $\mathfrak{B}'=\mathfrak{B}$, the latter being the basis of $H$ given by \eqref{E:ABaseOfH}. First, it is easy to check that $\mathfrak{B}'\subseteq\mathfrak{B}$ by definition. Now, let $A_{i,j}\in\mathfrak B$ and we want to show $A_{i,j}\in\mathfrak B'$. Consider two cases. For the first case, suppose $i$ is contained in an infinite orbit $\mathcal O$. Then $(i,j)\in\mu(\mathcal O)=\mathcal O\times \mathbb Z$ and thus $A_{i,j}=V_{\mathcal O,(i,j)}$ is in $\mathfrak{B}'$. For the second case, suppose $i$ is contained in a finite orbit $\mathcal O$ distinct from $\{i_0\}$. 
Recall that $E_{\mathcal O}(t)=(k(t),j(t))$ is a bijection from $\mathbb Z$ to $\mathcal O\times \mathbb Z$. Hence there is a unique $t\in\mathbb Z$, such that $k(t)=i$ and $j(t)=j$. Now we have $V_{\mathcal O,t}=A_{k(t),j(t)}=A_{i,j}$, thus $A_{i,j}$ is contained in $\mathfrak B'$. 


We now define a total order $\prec$ on the index set $$\mathcal I:=\{(\mathcal O,t)\mid \mathcal O\in \mathbb O(\sigma), t\in\mu(\mathcal O)\}.$$
First, let $<'$ be an arbitrary total order of $\mathbb O(\sigma)$. Second, we define an ordering $<$ on each $\mu(\mathcal O)$. If $\mathcal O$ is finite, $<$ is the usual order on $\mu(\mathcal O)=\mathbb Z$. When $\mathcal O$ is infinite, let $(i,j), (i',j')\in \mu(\mathcal O)=\mathcal O\times \mathbb Z$. We write  $(i,j)<(i',j')$ if and only if  $i'\in\{\sigma^k(i)\mid k\geq1\}$ or $(i',j')\in\{(i,j+k)\mid k\geq1\}$.  

 The order $\prec$ is defined lexicographically: $(\mathcal O, t)\prec (\mathcal O',t')$ if and only if $\mathcal O<'\mathcal O'$ or $\mathcal O=\mathcal O'$ and $t<t'$ in $\mu(\mathcal O)$. Now set $\mathcal B=(\mathfrak{B}',\prec)$, which is an ordered basis of $H$.

\item Last we check that both $(\varphi|_K)_{ab}$ and $(\psi|_K)_{ab}$ are  positively triangular with respect to $\mathcal B$.

This essentially follows from \eqref{E:psiabVOt}, \eqref{E:phiabVOt}, \eqref{E:TheTranslationFormulaForPsi} and \eqref{E:TheTranslationFormula}. The details are as follows. 

First, define $\eta:\mathcal I\rightarrow\mathcal I$ by
\begin{align*}
\eta(\mathcal O,t)= 
\begin{cases}(\mathcal O, (\sigma(i),j+h(w_i)))& \text{ if }|\mathcal O|=\infty \text{ and } t=(i,j)\in\mathcal O\times\mathbb Z\\
             (\mathcal O, t+h_{\mathcal O})& \text{ if }|\mathcal O|<\infty \text{ and } t\in\mathbb Z.
\end{cases}
\end{align*}
We can then verify that $\eta$ preserves the order $\prec$. Therefore $(\varphi|_K)_{ab}$ is positively triangular with respect to $\mathcal B$ by \eqref{E:phiabVOt} and \eqref{E:TheTranslationFormula}. 

Second, define $\zeta:\mathcal I\to\mathcal I$ by 
\begin{align*}
\zeta(\mathcal O,t)=
\begin{cases}(\mathcal O, (i,j+1))& \text{ if }|\mathcal O|=\infty \text{ and } t=(i,j)\in\mathcal O\times\mathbb Z\\
             (\mathcal O, t+|\mathcal O|)& \text{ if }|\mathcal O|<\infty \text{ and } t\in\mathbb Z.
\end{cases}
\end{align*}
We can verify that $\zeta$ preserves the order $\prec$. Therefore $\psi|_K$ is positively triangular with respect to $\mathcal B$ by \eqref{E:psiabVOt} and \eqref{E:TheTranslationFormulaForPsi}. 
 \end{enumerate}
\end{proof}

Theorem \ref{T:CoprimeToOrderpreserving} says that if an automorphism as in the statement of the theorem satisfies the coprime condition: $\gcd(h_\mathcal O,|\mathcal O|)=1$ for each finite orbit $\mathcal O$ of $\sigma$, then the automorphism preserves a bi-ordering of the free group. 
If we replace this coprime condition by the (weaker) non-vanishing condition: $h_{\mathcal O}\neq0$ for each finite orbit of $\sigma$ with $|\mathcal O|\geq2$, we can still prove that the automorphism preserves a left-ordering.

The proof of this fact remains almost the same; the major change is the definition of $V_{\mathcal O,t}\in H$ for $t\in\mathbb Z$ and each finite orbit $\mathcal O$ of $\sigma$, which is given by the following. Assume that $\mathcal O=\{k_1,\dotsc,k_r\}$ with $r=|\mathcal O|$, $\sigma(k_i)=k_{i+1}$ for $1\leq i\leq r-1$. If $r=1$, we just define $V_{\mathcal O,t}=A_{k_1,t}\in H$, where $t\in\mathbb Z$.
Now consider $r\geq2$. For each $t\in\mathbb Z$, there are unique integers $i=i(t), j=j(t), m=m(t)$ (by the Euclidean algorithm) such that
 $$t=(mr+j)h_{\mathcal O}+i\text{ with } 0\leq i\leq|h_{\mathcal O}|-1, 1\leq j\leq r, m\in\mathbb Z.$$
  Then we define 
  $$V_{\mathcal O,t}=A_{{k_{j(t)}},m(t)h_{\mathcal O}+h_{j(t)}+i(t)}, \quad\forall t\in\mathbb Z,$$ where $h_j:=\sum_{1\leq p\leq j-1}h(w_{k_p})$ for $j=1,2,\dotsc,r$.  We then verify that the constructed bi-ordering on $K$ is still invariant under $\phi$ and that the left-ordering from the exact sequence is invariant under $\varphi$.
  
  \begin{example}
Consider the free group $F$ with basis $\{x_1, x_2, x_3\}$ and $\varphi:F \rightarrow F$ given by $x_1\mapsto x_3x_2x_3^{-1}, x_2\mapsto x_3x_1x_3^{-1}, x_3\mapsto x_3$.  With $i_0=3$ we see $h_{\mathcal O}=2$ for the orbit $\mathcal O=\{1,2\}$, and so $\varphi$ preserves a left-ordering, although it clearly does not preserve a bi-ordering. 
\end{example}

\begin{example}\label{E:OrderPreservingButNotPositivelyTriangular}
Theorem \ref{T:CoprimeToOrderpreserving} gives us an example of automorphism $\varphi$ on free group $F$ which is order preserving but $\varphi_{ab}\otimes Id$ is not positively triangular on $F/[F,F]\otimes\mathbb R$. For instance, let $F$ be free with basis $\{x_1, x_2, x_3\}$ and let $\varphi$ be the automorphism of $F$ defined by
$$ x_1\mapsto x_3x_2x_3^{-1}, x_2\mapsto x_1, x_3\mapsto x_3.$$ Then $\varphi$ is order-preserving by Theorem \ref{T:CoprimeToOrderpreserving}. However $\varphi_{ab}\otimes Id$ is not positively triangular on $F/[F,F]\otimes\mathbb R$.

To see this, suppose that $\varphi_{ab}\otimes Id:F/[F,F]\otimes\mathbb R \rightarrow F/[F,F]\otimes\mathbb R$ is positively triangular, then by Lemma \ref{L:PositiveTriangularbi-ordering} $\varphi_{ab}\otimes Id$ preserves a bi-ordering on $F/[F,F]\otimes\mathbb R$.   But this is impossible as $\varphi_{ab}\otimes Id$ swaps the two elements $\overline{x_1}\otimes 1$ and $\overline{x_2}\otimes 1$.
\end{example}

\section{Order-preserving braids, bi-orderable link groups and 3-manifolds}
\label{section:links}

Recall from the introduction that the braid group $B_n$ is generated by $\sigma_1,\dotsc,\sigma_{n-1}$ subject to $\sigma_i\sigma_{i+1}\sigma_i=\sigma_{i+1}\sigma_i\sigma_{i+1}$ for  $1\leq i\leq n-2$, and 
$\sigma_i\sigma_j=\sigma_j\sigma_i$ for $|i-j|>1$; and that the Artin action of $B_n$ on $F_n$ is determined by the following action of the generators $\sigma_i$ of $B_n$ on the generators $x_j$ of $F_n$:
\begin{displaymath}
   x_j^{\sigma_i} = \left\{
     \begin{array}{ll}
        x_ix_{i+1}x_i^{-1} & \mbox{ if $j = i$} \\
       x_i & \mbox{ if $j = i+1$} \\
        x_j & \mbox{ if $j \neq i, i+1$.}
     \end{array}
   \right.
\end{displaymath} 

We will call a braid $\beta \in B_n$ order-preserving if its action on $F_n$ induces an order-preserving automorphism $F \rightarrow F$.

Every braid has an associated permutation, which is its image under the homomorphism $B_n \rightarrow S_n$ given by sending $\sigma_i$ to the transposition that exchanges $i$ and $i+1$. We will often refer to the image of $\beta$ under this map as ``\textbf{the underlying permutation} of $\beta$."  The kernel of this homomorphism is $PB_n$, the subgroup of pure braids.

As our convention is that the action of $B_n$ on $F_n$ is a right action (consistent with \cite{KR18}), for this section our convention for composition of permutations is to do the leftmost first.  For example, $\sigma \tau$ acts on elements first by $\sigma$. We therefore write the action of a permutation $\sigma$ on an element $i$ as exponentiation, $i^{\sigma}$.

\subsection{Minimal volume 3-manifolds}

In \cite{KR18}, the authors initiated a study of bi-orderability of the fundamental groups of minimal volume cusped hyperbolic $3$-manifolds.  For such manifolds, they determined whether or not the fundamental group is bi-orderable for all examples with five or fewer cusps, with one exception.  The fundamental group of a three-cusped manifold, called the ``magic manifold", could not be addressed by the techniques of \cite{KR18}. 

\begin{proposition}
\label{P:magic}
    The fundamental group of the magic manifold is bi-orderable.
\end{proposition}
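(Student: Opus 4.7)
The plan is to apply Theorem \ref{T:CoprimeToOrderpreserving} to the automorphism of $F_3$ induced by the Artin action of $\beta = \sigma_1^2\sigma_2^{-1}$, and then invoke the equivalence from \cite[Section 2.4]{KR18} between order-preservation of this automorphism and bi-orderability of $\pi_1(S^3 \setminus br(\beta))$; since $S^3 \setminus br(\beta)$ is the magic manifold, this settles the proposition.

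First I would carry out the computation of the automorphism $\varphi$ of $F_3$ corresponding to $\beta$. Applying $\sigma_1$ twice using the formulas for the Artin action (and being careful that this is a right action), one finds
\[
x_1^{\sigma_1^2} = x_1x_2x_1x_2^{-1}x_1^{-1},\quad x_2^{\sigma_1^2} = x_1x_2x_1^{-1},\quad x_3^{\sigma_1^2}=x_3.
\]
The action of $\sigma_2^{-1}$ sends $x_1 \mapsto x_1$, $x_2\mapsto x_3$, and $x_3 \mapsto x_3^{-1}x_2x_3$, so composing gives
\[
\varphi(x_1) = x_1x_3x_1x_3^{-1}x_1^{-1},\quad \varphi(x_2) = x_1x_3x_1^{-1},\quad \varphi(x_3) = x_3^{-1}x_2x_3.
\]
This is already in the form $\varphi(x_i) = w_i x_{\sigma(i)} w_i^{-1}$, with $\sigma$ the permutation fixing $1$ and swapping $2$ and $3$, and with $w_1 = x_1x_3$, $w_2 = x_1$, $w_3 = x_3^{-1}$.

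Next I would verify the hypotheses of Theorem \ref{T:CoprimeToOrderpreserving}. Take $i_0 = 1$, so that $h(x_1) = 1$ and $h(x_2) = h(x_3) = 0$. The orbits of $\sigma$ are $\{1\}$ and $\{2,3\}$, both finite. For $\{1\}$ the gcd condition is automatic since $|\mathcal O|=1$. For $\mathcal O = \{2,3\}$ we compute $\sum_{i \in \mathcal O} h(w_i) = h(x_1) + h(x_3^{-1}) = 1+0 = 1$, and $\gcd(2,1)=1$. Hence Theorem \ref{T:CoprimeToOrderpreserving} applies and produces a bi-ordering of $F_3$ invariant under $\varphi$.

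Finally I would cite \cite[Section 2.4]{KR18}, which states exactly that the Artin action of $\beta$ preserves a bi-ordering of $F_n$ if and only if $\pi_1(S^3 \setminus br(\beta))$ is bi-orderable. Since the complement of $br(\sigma_1^2\sigma_2^{-1})$ is homeomorphic to the magic manifold (as noted in the introduction and indicated in Figure \ref{fig:magic_link}), the bi-orderability of the magic manifold group follows. The main, and essentially only, obstacle in this argument is the initial computation of $\varphi$ and putting it into the standard conjugation form required by the theorem; once that is done, verifying the coprime condition is immediate.
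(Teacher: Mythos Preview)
Your proof is correct and follows essentially the same route as the paper: compute the Artin action of $\sigma_1^2\sigma_2^{-1}$, read off $\sigma$ and the $w_i$, take $i_0=1$, check the coprime condition on the orbit $\{2,3\}$, and conclude via the equivalence from \cite{KR18}. The only cosmetic difference is that the paper cites \cite[Section 5.5]{KR18} for the identification with the magic manifold, whereas you invoke \cite[Section 2.4]{KR18} together with Figure~\ref{fig:magic_link}; both are adequate.
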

\begin{proof}
By \cite[Section 5.5]{KR18}, the fundamental group of the magic manifold is bi-orderable if and only if the braid $\sigma_1^2 \sigma_2^{-1}$ is order-preserving. Let $F_3$ denote the free group on generators $x_1, x_2, x_3$.  One can verify that $\sigma_1^2 \sigma_2^{-1}$ induces a map $\varphi: F_3 \rightarrow F_3$ given by 
\[\varphi(x_1) = x_1x_3x_1x_3^{-1}x_1^{-1}, \, \varphi(x_2) = x_1x_3x_1^{-1}\, \varphi(x_3) = x_3^{-1}x_2x_3,
\]
the underlying permutation $\sigma$ is the transposition $(2 \; 3)$. 

In order to apply Theorem \ref{T:CoprimeToOrderpreserving}, we define $h: F_3 \rightarrow \mathbb{Z}$ by $h(x_1) = 1, h(x_2) = 0, h(x_3) = 0$.  Then observe that $\sigma$ has a single orbit $\mathcal O = \{2,3\}$ that does not contain $1$, and that $h_{\mathcal O} = h(x_1)+h(x_3^{-1}) = 1$.  Since $\gcd(2,1) = 1$, the braid $\sigma_1^2 \sigma_2^{-1}$ is order-preserving.
\end{proof}

This resolves \cite[Question 5.6]{KR18} in the affirmative. Moreover, from the proof, we see that the braid $\beta = \sigma_1^2\sigma_2^{-1}$ is order-preserving. Note that multiplication of this by $\sigma_2^2$ does not change the underlying permutation and also does not alter the value of $h_{\mathcal O}$ in the above discussion.  The same is true of multiplication by $\sigma_2^{-2}$.  Therefore the braid 
$\sigma_1^2\sigma_2^{2n-1}$ is also order-preserving.  On the other hand $\sigma_1^2\sigma_2^{2n}$, being a pure braid, is also order-preserving.  Therefore we have the following.

\begin{theorem}
For every integer $n$, the braid $\sigma_1^2\sigma_2^{n}$ is order-preserving.
\end{theorem}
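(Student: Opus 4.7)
The plan is to separate the statement according to the parity of $n$ and invoke Theorem \ref{T:CoprimeToOrderpreserving} in each case, essentially making precise the two sentences immediately preceding the theorem.

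For the odd case, $n = 2m-1$, I would write $\sigma_1^2 \sigma_2^{2m-1} = \beta \cdot \sigma_2^{2m}$ where $\beta = \sigma_1^2 \sigma_2^{-1}$ is the braid treated in Proposition \ref{P:magic}. Since $\sigma_2^{2m}$ is a pure braid, the underlying permutation of $\sigma_1^2\sigma_2^{2m-1}$ coincides with that of $\beta$, namely the transposition exchanging $2$ and $3$; in particular $1$ is still fixed and the unique nontrivial orbit is $\mathcal{O} = \{2,3\}$. With the same homomorphism $h$ as in the proof of Proposition \ref{P:magic}, I need to verify that the orbit sum $h_{\mathcal{O}}$ is unchanged. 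Writing $x_i^{\sigma_2^{2m}} = v_i x_i v_i^{-1}$ with $v_1 = 1$ and $v_2, v_3 \in \langle x_2, x_3\rangle$ (since $\sigma_2$ acts trivially on $x_1$ and only involves $x_2, x_3$ in the images of $x_2, x_3$), a short calculation using the homomorphism property gives new conjugators of the form $w_i' = w_i^{\sigma_2^{2m}} \cdot v_{\sigma(i)}$. Because $\sigma_2^{2m}$ is pure, its induced automorphism on $F_3^{\mathrm{ab}}$ is the identity, so $h(w_i^{\sigma_2^{2m}}) = h(w_i)$; and the $v_i$'s contribute $h(v_2) + h(v_3) = 0$. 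Hence $h_{\mathcal{O}}$ is unchanged and equals $1$, and $\gcd(2,1) = 1$, so Theorem \ref{T:CoprimeToOrderpreserving} applies.

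For the even case, $n = 2m$, the braid $\sigma_1^2 \sigma_2^{2m}$ lies in $PB_3$ because both $\sigma_1^2$ and $\sigma_2^{2m}$ do. Consequently, the induced automorphism has the form $x_i \mapsto w_i x_i w_i^{-1}$ for some $w_i \in F_3$, and the underlying permutation is the identity. Taking $i_0 = 1$ (or any index), every orbit of $\sigma$ is a singleton, so the coprime condition reduces to $\gcd(1, h(w_i)) = 1$, which holds trivially. Theorem \ref{T:CoprimeToOrderpreserving} then yields an invariant bi-ordering.

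I expect the only mildly nontrivial point to be the bookkeeping in the odd case, where one must verify that right-multiplying by a pure braid preserves both the underlying permutation and the orbit sum $h_{\mathcal{O}}$; this is essentially an abelianization argument and uses only that $\sigma_2^{2m}$ has all its conjugators $v_i$ supported on $\{x_2, x_3\} \subset \ker(h)$. Combining the two cases covers every integer $n$, establishing the theorem.
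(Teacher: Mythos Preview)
Your proposal is correct and follows essentially the same approach as the paper: split by parity, handle the odd case by showing that right-multiplication by $\sigma_2^{\pm 2}$ leaves both the underlying permutation and the orbit sum $h_{\mathcal O}$ unchanged so Theorem \ref{T:CoprimeToOrderpreserving} still applies, and handle the even case via the pure-braid observation. The only minor difference is that in the even case you invoke Theorem \ref{T:CoprimeToOrderpreserving} directly (with all orbits singletons), whereas the paper simply cites the known fact that pure braids are order-preserving.
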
 

We note that it was shown by Johnson, Scherich and Turner that the braids $\sigma_1\sigma_2^{2k+1}$
are not order-preserving for any integer $k$ \cite{JST23}.  Such 3-braids have underlying permutation a single cycle.  We do not know if there exists an order-preserving braid whose permutation is a single cycle. 

\subsection{Dehn surgery and sublinks of bi-orderable links}

We can also improve upon \cite[Theorem 5.12]{KR18}, which says that every link $L$ in $S^3$ is a sublink of a link $L'$ such that $\pi_1(S^3 \setminus L')$ is bi-orderable.  We provide a construction of such an $L'$ by adding only two components, and show that a similar result holds for all links in arbitrary $3$-manifolds.

Recall that the pure braid group $PB_n$ is generated by 
\[ A_{i,j} = \sigma_{j-1} \dots \sigma_{i+1} \sigma_i^2 \sigma_{i+1}^{-1} \dots \sigma_{j-1}^{-1}
\]
where $1 \leq i < j \leq n$ \cite[Lemma 1.8.2]{birman74}.  For example, we have $A_{i,i+1}=\sigma_i^2$ for $1\leq i\leq n-1$ and $A_{i,i+2}=\sigma_{i+1}\sigma_i\sigma_{i+1}^{-1}$ for $1\leq i\leq n-2$.

\begin{lemma}
\label{L:Aijcomputation}
Fix $i,j$ with $1 \leq i<j \leq n$.  Then we have the following:
\begin{displaymath}
   x_k^{A_{i,j}} = \left\{
     \begin{array}{ll}
        (x_ix_j)x_i(x_j^{-1}x_i^{-1}) & \mbox{ if $k = i$} \\
       (x_ix_jx_i^{-1}x_j^{-1})x_k(x_jx_ix_j^{-1}x_i^{-1}) & \mbox{ if $i+1\leq k\leq j-1$} \\
       x_ix_jx_i^{-1} & \mbox{ if $k = j$}\\
        x_k & \mbox{ otherwise.}
     \end{array}
   \right.
\end{displaymath} 

\end{lemma}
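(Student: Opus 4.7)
The plan is to prove the formula by induction on $j - i \ge 1$, using the recurrence
\[
A_{i,j} = \sigma_{j-1}\, A_{i,j-1}\, \sigma_{j-1}^{-1} \quad \text{for } j - i \ge 2,
\]
which is immediate from the definition of $A_{i,j}$. Because the Artin action is a right action, this translates to $x_k^{A_{i,j}} = \bigl((x_k^{\sigma_{j-1}})^{A_{i,j-1}}\bigr)^{\sigma_{j-1}^{-1}}$, so the whole computation reduces to applying the Artin formulas for $\sigma_{j-1}$ and $\sigma_{j-1}^{-1}$ on top of the inductive hypothesis.

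For the base case $j = i+1$ I would just directly compute $x_k^{\sigma_i^2}$ from the definition of the Artin action: one obtains $x_i x_{i+1} x_i x_{i+1}^{-1} x_i^{-1}$ for $k = i$, then $x_i x_{i+1} x_i^{-1}$ for $k = i+1$, and $x_k$ otherwise. These match the claimed formulas, noting that the middle range $i+1 \le k \le j-1$ is vacuous when $j = i+1$.

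For the inductive step, assuming $j - i \ge 2$ (so $i < j-1$ strictly), I would check the cases $k = i$, $i+1 \le k \le j-2$, $k = j-1$, $k = j$, and $k$ outside $\{i, \dots, j\}$ separately. The key auxiliary formulas I will invoke are those for $\sigma_{j-1}^{-1}$: it sends $x_{j-1} \mapsto x_j$, $x_j \mapsto x_j^{-1} x_{j-1} x_j$, and fixes every other generator. In most cases, $x_k$ is either fixed by $\sigma_{j-1}$ or sent to a generator that, after applying $A_{i,j-1}$ by the inductive hypothesis, involves only $x_i$ and $x_{j-1}$; the outer conjugation by $\sigma_{j-1}^{-1}$ then simply replaces each occurrence of $x_{j-1}$ by $x_j$, yielding the predicted formula. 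The case $k = j-1$ is the only one with a genuine twist: $x_{j-1}^{\sigma_{j-1}} = x_{j-1} x_j x_{j-1}^{-1}$ introduces an $x_j$, which, after applying $A_{i,j-1}$ (fixing $x_j$, as $j$ lies outside its support) and then $\sigma_{j-1}^{-1}$, expands to $x_j^{-1} x_{j-1} x_j$; a short manipulation then produces the stated conjugation formula.

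I expect the main obstacle to be purely organizational: keeping straight that the ``top'' index of $A_{i,j-1}$ is $j-1$ (so the inductive hypothesis gives $x_{j-1}^{A_{i,j-1}} = x_i x_{j-1} x_i^{-1}$), while $x_j$ lies outside the support of $A_{i,j-1}$ and is therefore fixed by it. Once this bookkeeping is set up, each of the five cases is a short and essentially mechanical substitution, so there is no deep difficulty beyond the induction itself.
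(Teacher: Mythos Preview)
Your proposal is correct and takes a genuinely different route from the paper. The paper unwinds the action of $A_{i,j}=\sigma_{j-1}\cdots\sigma_{i+1}\sigma_i^2\sigma_{i+1}^{-1}\cdots\sigma_{j-1}^{-1}$ directly, tracking $x_k$ through the full word one $\sigma_\ell^{\pm1}$ at a time for each of the four cases; you instead induct on $j-i$ via the conjugation recurrence $A_{i,j}=\sigma_{j-1}A_{i,j-1}\sigma_{j-1}^{-1}$. Your approach packages the computation more cleanly: once the base case $A_{i,i+1}=\sigma_i^2$ is checked, every inductive case except $k=j-1$ reduces to the observation that $\sigma_{j-1}^{-1}$ simply substitutes $x_j$ for $x_{j-1}$ in a word involving only $x_i$, $x_{j-1}$, and possibly $x_k$ with $k<j-1$ (your phrase ``involves only $x_i$ and $x_{j-1}$'' slightly understates this for the middle range, but the point stands since $\sigma_{j-1}^{-1}$ fixes such $x_k$ as well). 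The paper's direct computation, by contrast, avoids setting up an induction but trades this for longer chains of substitutions, particularly in the case $i+1\le k\le j-1$. Either argument is routine; yours scales a bit better and makes the structural reason for the uniform answer more visible.
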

\begin{proof}
First, if $k \notin \{i,i+1, \dots, j\}$ then $ x_k^{A_{i,j}} = x_k$ follows from the observation that $x_k^{\sigma_{\ell}} = x_k$ for all $\ell \in \{i,i+1, \dots, j-1\}$.

Next consider $ x_i^{A_{i,j}}$.  We compute
\begin{align*}
x_i^{A_{i,j}}=&x_i^{\sigma_i^2 \sigma_{i+1}^{-1} \dots \sigma_{j-1}^{-1}}\\
=&(x_ix_{i+1}x_i^{-1}x_ix_ix_{i+1}^{-1}x_i^{-1})^{\sigma_{i+1}^{-1} \dots \sigma_{j-1}^{-1}}\\
=& (x_ix_j)x_i(x_j^{-1}x_i^{-1}).
\end{align*}
One similarly computes that $x_j^{A_{ij}}=x_ix_jx_i^{-1}$.

On the other hand, if $k \in \{i+1, \dots, j-1\}$ 
then
\begin{align*}
x_k^{A_{i,j}}=&x_k^{\sigma_k \dots \sigma_{i+1}\sigma_i^2 \sigma_{i+1}^{-1} \dots \sigma_{j-1}^{-1}}\\
=& (x_kx_{k+1}x_k^{-1})^{\sigma_{k-1} \dots \sigma_{i+1}\sigma_i^2 \sigma_{i+1}^{-1} \dots \sigma_{j-1}^{-1}}\\
=& (x_ix_{k+1}x_i^{-1})^{\sigma_i \sigma_{i+1}^{-1} \dots \sigma_{j-1}^{-1}}\\
=& ((x_ix_{i+1}x_i^{-1})x_{k+1}(x_ix_{i+1}^{-1}x_i^{-1}))^{\sigma_{i+1}^{-1} \dots \sigma_{j-1}^{-1}}\\
=& ((x_ix_{k}x_i^{-1})x_{k+1}(x_ix_{k}^{-1}x_i^{-1}))^{\sigma_{k}^{-1} \dots \sigma_{j-1}^{-1}}\\
=& ((x_ix_{k+1}x_i^{-1}x_{k+1}^{-1})x_k(x_{k+1}x_ix_{k+1}^{-1}x_i^{-1}))^{\sigma_{k+1}^{-1} \dots \sigma_{j-1}^{-1}}\\
=& (x_ix_jx_i^{-1}x_j^{-1})x_k(x_jx_ix_j^{-1}x_i^{-1}).
\end{align*}

\end{proof}


\begin{lemma}
\label{L:changehc}
    Let $\beta \in B_{n}$ 
    with underlying permutation $\sigma$ and fix $i$ with $1\leq i<n$.  
    Suppose that $\beta$ yields the automorphism $\phi:F_n \rightarrow F_n$ with $\phi(x_k) = w_k x_{j} w_k^{-1}$ where $j = k^\sigma$, and 
    let $\psi :F_n \rightarrow F_n$ given by $\psi(x_k) = u_k x_{j}u_k^{-1}$ denote the automorphism of $F_n$ induced by $\beta A_{i,n}$.
    Define homomorphism $h : F_n \rightarrow \mathbb{Z}$ by $h(x_n) =1$ and $h(x_k) = 0$ for all $k=1, \ldots, n-1$.  For each cycle $c = (k_1, \; \cdots \; ,k_r)$ in the cycle decomposition of $\sigma$, set $h_c = \sum_{i=1}^rh(w_{k_i})$ and $\ell_c =\sum_{i=1}^rh(u_{k_i})$.  Then $\ell_c = h_c+1$ if $i\in \{ k_1, \dots, k_r\}$ and $\ell_c = h_c$ otherwise.
\end{lemma}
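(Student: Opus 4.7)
The plan is to track how conjugation by $A_{i,n}$ modifies each conjugating factor $w_k$, and then to exploit the fact that $A_{i,n}$ is pure so that its action on the abelianisation is trivial.

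First, I would apply Lemma \ref{L:Aijcomputation} with $j=n$ to write $x_k^{A_{i,n}} = v_k x_k v_k^{-1}$ explicitly for each $k$: namely $v_i = x_ix_n$, $v_k = x_ix_nx_i^{-1}x_n^{-1}$ for $i+1 \leq k \leq n-1$, $v_n = x_i$, and $v_k = 1$ for $k < i$. A direct computation using $h(x_n) = 1$ and $h(x_\ell) = 0$ for $\ell \neq n$ then gives $h(v_i) = 1$ and $h(v_k) = 0$ for all $k \neq i$. This is the only place where the specific formulas from Lemma \ref{L:Aijcomputation} are needed.

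Next, since $A_{i,n}$ is a pure braid, the induced automorphism $\alpha$ on $F_n$ sends each generator to a conjugate of itself, so $\alpha$ acts as the identity on the abelianisation $F_n^{\mathrm{ab}}$. Because $h$ factors through the abelianisation, this yields $h \circ \alpha = h$ as homomorphisms $F_n \to \mathbb{Z}$. Now using the right-action convention, $\psi(x_k) = \phi(x_k)^{A_{i,n}} = \alpha(w_k x_{k^\sigma} w_k^{-1}) = \alpha(w_k)\cdot v_{k^\sigma} x_{k^\sigma} v_{k^\sigma}^{-1} \cdot \alpha(w_k)^{-1}$, so one may take $u_k = \alpha(w_k)\, v_{k^\sigma}$. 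Hence
\[
h(u_k) = h(\alpha(w_k)) + h(v_{k^\sigma}) = h(w_k) + h(v_{k^\sigma}).
\]

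Finally, fix a cycle $c = (k_1, \ldots, k_r)$ of $\sigma$. Summing the previous identity over $c$ gives
\[
\ell_c \;=\; \sum_{p=1}^r h(u_{k_p}) \;=\; \sum_{p=1}^r h(w_{k_p}) + \sum_{p=1}^r h(v_{k_p^\sigma}) \;=\; h_c + \sum_{q=1}^r h(v_{k_q}),
\]
the last equality using that $\sigma$ permutes $\{k_1, \ldots, k_r\}$. By the first step, each term $h(v_{k_q})$ is $1$ if $k_q = i$ and $0$ otherwise, so the final sum is $1$ precisely when $i$ lies in $c$ and $0$ otherwise, giving the stated conclusion. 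No step is really an obstacle; the only conceptual point to watch is invoking that $A_{i,n}$ is pure in order to discard $\alpha$ from $h \circ \alpha$, which is what makes the computation reduce to the single correction term $h(v_{k^\sigma})$.
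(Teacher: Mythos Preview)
Your argument is correct and is essentially the same as the paper's: both compute $\psi(x_k)=(w_kx_{k^\sigma}w_k^{-1})^{A_{i,n}}$ via Lemma~\ref{L:Aijcomputation}, observe that $h(w_k^{A_{i,n}})=h(w_k)$, and read off that $h(u_k)=h(w_k)+1$ precisely when $k^\sigma=i$. The only cosmetic difference is that you package the correction term as $h(v_{k^\sigma})$ and justify $h\circ\alpha=h$ by noting $A_{i,n}$ is pure (hence trivial on abelianisation), whereas the paper appeals directly to the explicit formulas of Lemma~\ref{L:Aijcomputation}; both reach the same conclusion by the same route.
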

\begin{proof}
    Given $k \in \{1, \dots, n\}$ we write $j=k^\sigma$ and compute that 
    $u_k x_{j}u_k^{-1}= \psi(x_k) = x_k^{\beta A_{i,n}} = (w_k x_{j} w_k^{-1})^{A_{i,n}}$. Thus we have
    \begin{displaymath}
   u_k = \left\{
     \begin{array}{ll}
        w_k^{A_{i,n}}x_ix_n & \mbox{ if $j = i$} \\
        w_k^{A_{i,n}}(x_ix_nx_i^{-1}x_n^{-1}) & \mbox{ if $j \in \{i+1, \dots, n-1\}$} \\
        w_k^{A_{i,n}}x_i &
        \mbox{ if $j=n$}\\
        w_k^{A_{i,n}} & \mbox{ otherwise,}
     \end{array}
   \right.
\end{displaymath} 
by appealing to Lemma \ref{L:Aijcomputation}.

Note that by Lemma \ref{L:Aijcomputation} we have $h(w_k^{A_{i,n}}) = h(w_k)$, and so we conclude that $h(u_k) = h(w_k)+1$ if $k^\sigma = i$, and $h(u_k) = h(w_k)$ otherwise.  The conclusion of the lemma follows immediately from this observation.
\end{proof}

In the proposition below, for $g_1, \ldots , g_n \in G$ we use the notation $sg\{g_1, \dots ,g_n\}$ to denote the subsemigroup of $G$  generated by $\{g_1, \dots ,g_n\}$.

\begin{proposition}
\label{P:orderpres}
Suppose $n \geq 3$.  For all $ \beta \in B_{n-1}$ there exists $\alpha \in sg\{A_{1, n}, \dots, A_{n-1, n}\}$ such that $\beta \alpha$ is order-preserving. 
\end{proposition}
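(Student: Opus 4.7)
The plan is to apply Theorem \ref{T:CoprimeToOrderpreserving} directly to $\beta\alpha$ by using the factors of $\alpha$ to correct the relevant gcds cycle-by-cycle.  Since $\beta\in B_{n-1}\subset B_n$, its underlying permutation $\sigma$ fixes $n$, and because every $A_{i,n}$ is a pure braid, any $\alpha\in sg\{A_{1,n},\dotsc,A_{n-1,n}\}$ is also pure; so $\beta\alpha$ has underlying permutation $\sigma$ as well.  Take $i_0 = n$ and define $h\colon F_n\to\mathbb Z$ by $h(x_n)=1$ and $h(x_k)=0$ for $k<n$, as in Lemma \ref{L:changehc}.  It then suffices to arrange, for each cycle $c$ of $\sigma$, that the integer $h_c$ attached to $\beta\alpha$ satisfies $\gcd(|c|,h_c) = 1$.

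Let $c_1,\dotsc,c_s$ be the cycles of $\sigma$ of length at least two (all contained in $\{1,\dotsc,n-1\}$ since $n$ is fixed), and for each $t$ fix a representative $i_t\in c_t$.  Writing $h_{c_t}$ for the value attached to $c_t$ by the automorphism induced by $\beta$, I would choose a non-negative integer $m_t$ with $\gcd(|c_t|,\, h_{c_t} + m_t) = 1$; this is possible because the integers coprime to $|c_t|$ are unbounded above, so some such integer is $\ge h_{c_t}$.  Then set
\[
\alpha := A_{i_1,n}^{m_1}A_{i_2,n}^{m_2}\dotsm A_{i_s,n}^{m_s}\in sg\{A_{1,n},\dotsc,A_{n-1,n}\}.
\]
Iterated application of Lemma \ref{L:changehc}, one factor at a time, shows that the $h$-value of $\beta\alpha$ on cycle $c_t$ is $h_{c_t}+m_t$, which is coprime to $|c_t|$ by the choice of $m_t$; all other cycles of $\sigma$ (including the fixed orbit $\{n\}$) have length one, so their gcd condition is automatic.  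Theorem \ref{T:CoprimeToOrderpreserving} then gives that $\beta\alpha$ is order-preserving.

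The main point requiring care is the iteration of Lemma \ref{L:changehc}: each multiplication by $A_{i_t,n}$ alters the conjugators $w_k$ in a complicated way, but the lemma records precisely that only the $h$-invariant of the unique cycle containing $i_t$ is changed, and by exactly $+1$.  Because the cycles of $\sigma$ are pairwise disjoint, adjustments targeting different cycles $c_t$ do not interfere, and a short induction on the length of $\alpha$ as a word in the generators $A_{i,n}$ produces the claimed total increments $m_t$ in each targeted cycle.  (In the degenerate case $s=0$, one can simply take $\alpha = A_{1,n}$, which only perturbs an already-trivial gcd on a length-one cycle.)
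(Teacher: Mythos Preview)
Your argument is correct and is exactly the approach the paper takes: both proofs use Lemma \ref{L:changehc} iteratively to adjust the quantities $h_c$ cycle-by-cycle until the coprimality hypothesis of Theorem \ref{T:CoprimeToOrderpreserving} holds, and yours simply makes the inductive bookkeeping explicit. One tiny edge case you leave open is $s\ge 1$ with every $m_t=0$ (so $\alpha$ is the empty word, which is not in the semigroup); this is harmless since you can always take $m_1\ge 1$ by the same unboundedness argument you already used.
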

\begin{proof}
Let $\sigma$ denote the underlying permutation of the braid $\beta$, and observe that $n^\sigma=n$.  Since every $\alpha \in sg\{A_{1, n}, \dots, A_{n-1, n}\}$ is a pure braid, the underlying permutation of the braid $\beta \alpha$ is also $\sigma$ and so satisfies $n^\sigma=n$.  So, our task is to choose a braid $\alpha$ such that the relative primeness condition of Theorem \ref{T:CoprimeToOrderpreserving} is satisfied by $\beta \alpha$. 

For this, we inductively apply Lemma \ref{L:changehc},
right-multiplying $\beta$ by braids $A_{i,n}$ as needed to guarantee that the relative primeness condition holds.
\end{proof}

\begin{example}
As an example of how to construct $\alpha$ as in Proposition \ref{P:orderpres}, consider the braid $\beta = \sigma_1 \sigma_3 \in B_5$ as in Figure \ref{fig:alphaexample}. Since the generators $\sigma_i$ are not order-preserving \cite{KR18}, it follows that $\beta$ is not order-preserving by \cite[Corollary 9]{rolfsen18}. In particular, the underlying permutation of $\beta$ is $(1 \; 2) (3 \; 4)$, and each cycle fails the coprime condition of Theorem \ref{T:CoprimeToOrderpreserving} with $i_0=5$.  Yet, as we see in Figure \ref{fig:alphaexample}, we can multiply the braid $\beta$ by $\alpha = A_{4,5} A_{2,5}$ in order to produce a braid in $B_5$ that \emph{is} order-preserving.  The factors of $A_{4,5}$ and $A_{2,5}$ guarantee that the comprime condition holds for the cycles $(3 \; 4)$ and  $(1 \; 2)$ respectively.

\begin{figure}
    \centering
    \includegraphics{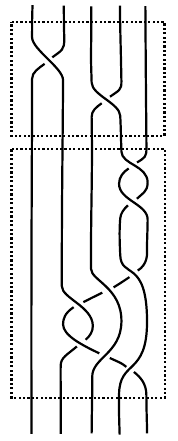}
    \caption{The braid $\beta = \sigma_1 \sigma_3$ in the dashed box on top, and the braid $\alpha = A_{4,5} A_{2,5}$ in the dashed box on the bottom.}
    \label{fig:alphaexample}
\end{figure}
\end{example}

In fact, the previous proposition holds for every braid $\beta \in B_n$ whose underlying permutation fixes $n$.  However, the statement of Proposition \ref{P:orderpres} is sufficient for our next theorem.

\begin{theorem}
\label{T:2morecomponents}
    Every $n$-component link $L$ in $S^3$ is a sublink of an $(n+2)$-component link $L'$ in $S^3$ such that $\pi_1(S^3 \setminus L')$ is bi-orderable.
\end{theorem}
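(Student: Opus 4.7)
The plan is to realize $L$ as a sublink of $br(\beta)$ for a carefully chosen order-preserving braid $\beta\in B_{m+1}$, and then to invoke the KR18 dictionary recalled in the introduction, by which $\pi_1(S^3\setminus br(\beta))$ is bi-orderable exactly when the Artin action of $\beta$ preserves a bi-ordering of the free group. First I would use Alexander's theorem, Markov-stabilizing if necessary to guarantee at least two strands, to write $L=\hat{\beta_0}$ for some $\beta_0\in B_m$ with $m\geq 2$; since $L$ has $n$ components, the underlying permutation of $\beta_0$ has exactly $n$ cycles. I would then view $\beta_0$ as an element of $B_{m+1}$ by adjoining a trivial $(m+1)$-st strand, which adds a fixed point to its underlying permutation and thereby brings the cycle count to $n+1$.

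Next, because $m+1\geq 3$, Proposition \ref{P:orderpres} (applied with its $n$ equal to $m+1$) would produce an $\alpha\in sg\{A_{1,m+1},\dotsc,A_{m,m+1}\}$ such that $\beta_0\alpha\in B_{m+1}$ is order-preserving. Because $\alpha$ is pure, $\beta_0\alpha$ has the same underlying permutation as $\beta_0$ viewed in $B_{m+1}$, so $\widehat{\beta_0\alpha}$ has $n+1$ components, and the link $L':=br(\beta_0\alpha)=\widehat{\beta_0\alpha}\cup A$ therefore has exactly $n+2$ components, as required. The KR18 correspondence then immediately furnishes a bi-ordering of $\pi_1(S^3\setminus L')$.

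It remains to exhibit $L$ as a sublink of $L'$, and this will be the main geometric point of the argument. The key observation is that each generator $A_{i,m+1}$ is geometrically the braid in which the $(m+1)$-st strand loops once around the $i$-th strand while every other strand returns to its starting position undisturbed; consequently, inside $\beta_0\alpha\in B_{m+1}$ the sub-braid formed by strands $1,\dotsc,m$ is precisely $\beta_0\in B_m$. Since the underlying permutation of $\beta_0\alpha$ fixes $m+1$, the $(m+1)$-st strand closes to a single component of $\widehat{\beta_0\alpha}$, and deleting that component together with the braid axis $A$ recovers exactly $\hat{\beta_0}=L$. The only delicate point I expect to need to record carefully is this geometric claim that forgetting the $(m+1)$-st strand of $\beta_0\alpha$ corresponds to deleting the associated closure component as a sublink of $\widehat{\beta_0\alpha}$ in $S^3$; everything else is a direct combination of Proposition \ref{P:orderpres} with the KR18 dictionary between order-preserving braids and bi-orderable link groups.
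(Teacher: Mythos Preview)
Your proposal is correct and follows essentially the same approach as the paper: both write $L$ as a braid closure via Alexander's theorem, pass to $B_{m+1}$ by adding a trivial strand, invoke Proposition~\ref{P:orderpres} to obtain a pure $\alpha$ with $\beta_0\alpha$ order-preserving, and set $L'=br(\beta_0\alpha)$. You are simply more explicit than the paper about the stabilization to ensure $m\geq 2$, the component count, and the sublink verification (which amounts to the standard fact that $A_{1,m+1},\dotsc,A_{m,m+1}$ lie in the kernel of the strand-forgetting map $PB_{m+1}\to PB_m$), all of which the paper leaves implicit.
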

\begin{proof}
By Alexander's theorem \cite{Alexander23}, the $n$-component link $L$ can be written as the closure of a braid $\beta \in B_k$ for some $k$.  By Proposition \ref{P:orderpres}, there is a braid $\alpha \in sg\{A_{1, k+1}, \dots, A_{k, k+1}\} \subset B_{k+1}$ such that $\beta \alpha$ is order-preserving.  The closure of $\beta \alpha$, together with the braid axis, gives a link $L'$ having $n+2$ components such that $\pi_1(S^3 \setminus L')$ is bi-orderable.
\end{proof}

\begin{corollary}
    Every link $L$ in a compact, connected, closed orientable $3$-manifold $M$ is a sublink of a link $L'$ in $M$ such that $\pi_1(M \setminus L')$ is bi-orderable. 
 Moreover, if $n$ denotes the minimal number of surgery curves in in $S^3$ needed to produce $M$, then $L'$ is obtained from $L$ by adding $n+2$ components.  
\end{corollary}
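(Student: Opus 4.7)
The plan is to reduce to Theorem \ref{T:2morecomponents} applied to an auxiliary link in $S^3$ formed by adjoining the surgery curves that produce $M$. First, using the Lickorish--Wallace theorem together with the minimality of $n$, I would write $M$ as the result of Dehn surgery on a framed $n$-component link $K = K_1 \cup \cdots \cup K_n \subset S^3$. After a small isotopy we may assume $L$ is disjoint from a closed tubular neighborhood $\nu(K)$, so that $L$ also makes sense as a link in $S^3 \setminus \nu(K)$, and hence $L \cup K$ is a link in $S^3$ with $|L|+n$ components.

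Next, apply Theorem \ref{T:2morecomponents} to the link $L \cup K \subset S^3$ to obtain a link $L'' \subset S^3$ that contains $L \cup K$ and has exactly two additional components $c_1, c_2$, such that $\pi_1(S^3 \setminus L'')$ is bi-orderable. By construction, $c_1$ and $c_2$ lie in $S^3 \setminus (L \cup K)$, so in particular they are contained in the exterior $S^3 \setminus \nu(K)$.

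Now transfer everything to $M$. Let $K' \subset M$ denote the collection of cores of the solid tori glued in during the surgery that produces $M$ from $S^3$, and let $c_1', c_2' \subset M$ be the images of $c_1, c_2$ under the natural identification of $S^3 \setminus \operatorname{int}\nu(K)$ with the surgery exterior $M \setminus \operatorname{int}\nu(K')$. Define
\[
L' \;=\; L \cup K' \cup c_1' \cup c_2',
\]
which contains $L$ as a sublink and has exactly $n+2$ additional components. Because the complement of the core of a solid torus deformation retracts onto its boundary torus, the natural inclusion $S^3 \setminus \nu(K) \hookrightarrow M \setminus K'$ is a homotopy equivalence; intersecting with the complement of $L \cup c_1' \cup c_2'$ yields a homotopy equivalence $S^3 \setminus L'' \simeq M \setminus L'$. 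Hence $\pi_1(M \setminus L') \cong \pi_1(S^3 \setminus L'')$ is bi-orderable, proving the first assertion. The moreover clause is the component count above.

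The main obstacle is not conceptual but rather organisational: one must be careful that the identification of $S^3 \setminus \operatorname{int}\nu(K)$ with $M \setminus \operatorname{int}\nu(K')$ is the one provided by the surgery description, so that the sublink $L$ and the auxiliary curves $c_1, c_2$ really do correspond to $L$ and $c_1', c_2'$ in $M$. This is automatic from the definition of Dehn surgery, but a sentence of justification is warranted.
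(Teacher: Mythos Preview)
Your proposal is correct and follows essentially the same route as the paper: use Lickorish--Wallace to realize $M$ as surgery on an $n$-component link in $S^3$, pull $L$ back through the identification of exteriors, apply Theorem~\ref{T:2morecomponents} to the union of $L$ with the surgery link, and push the resulting bi-orderable complement back into $M$. The paper phrases this via a homeomorphism $h:S^3\setminus L_2\to M\setminus L_1$ rather than via exteriors and a homotopy equivalence, but the content is identical. One tiny wording quibble: when you say ``$c_1,c_2$ lie in $S^3\setminus(L\cup K)$, so in particular they are contained in the exterior $S^3\setminus\nu(K)$,'' that inference requires shrinking $\nu(K)$ (or isotoping $c_1,c_2$); this is harmless, but worth a word.
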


\begin{proof}
By the Lickorish-Wallace theorem, $M$ can be obtained by surgery on $S^3$ \cite{lickorish, wallace}, so there are $n$-component links $L_1 \subset M$ and $L_2 \subset S^3$ and a homeomorphism $h: S^3 \setminus L_2 \to M \setminus L_1$.  We may assume that $L_1$ and $L$ are disjoint.  By Theorem \ref{T:2morecomponents}, there is a link $L_3 \subset S^3$ with $ L_2 \cup h^{-1}(L) \subset L_3$ and $\pi_1(S^3 \setminus L_3)$ bi-orderable; moreover, $L_3$ has two more components than $L_2 \cup h^{-1}(L)$.  Define $L' = h(L_3 \setminus L_2) \cup L_1$.  Then 
$h(S^3 \setminus L_3) = M \setminus L'$, so $\pi_1(M \setminus L')$ is bi-orderable.    
\end{proof}


\subsection{Multiplication by braids in \texorpdfstring{$B_{n-1}$}{B(n-1)}}

We can also generalise the observations made following Proposition \ref{P:magic}. There, we observed that it is possible to change a braid in $B_3$ which is \emph{not} order preserving into an order-preserving braid by multiplying by powers of a single generator of $B_3$.  

\begin{proposition}\label{P:B3TimsElementsInB2}
Given $\beta \in B_3$, there exists $k$ with $0 \leq k \leq 3$ such that $\beta \sigma_1^k$ is order-preserving.
\begin{proof}
    Let $\beta \in B_3$ be given.  If $\beta$ is pure then $k=0$ suffices, and if the underlying permutation of $\beta$ is $(1 \; 2)$, then $\beta \sigma_1$ is pure, and therefore $k=1$ suffices.  

On the other hand, suppose the underlying permutation of $\beta$ is $\tau = (1 \; 3)$, and define $h:F_3 \rightarrow \mathbb{Z}$ by $h(x_1) = h(x_3) =0$ and $h(x_2) =1$.  If $\beta$ yields an automorphism $\phi :F_3 \rightarrow F_3$ given by 
\[ \phi(x_1) = w_1 x_{3} w_1^{-1}, \; \phi(x_2) = w_2 x_{2} w_2^{-1}, \; \phi(x_3) = w_3 x_{1} w_3^{-1}
\]
and $h(w_1) + h(w_3)$ is odd, then $\beta$ is order-preserving by Theorem \ref{T:CoprimeToOrderpreserving}.  On the other hand if $h(w_1) + h(w_3)$ is even, then consider the braid $\beta \sigma_1^2$, also with underlying permutation $(1 \; 3)$ which acts by the permutation 
\[ \psi(x_1) = w_1^{\sigma_1^2} x_{3} (w_1^{-1})^{\sigma_1^2}, \; \psi(x_2) = w_2^{\sigma_1^2} x_1x_2x_1^{-1} (w_2^{-1})^{\sigma_1^2}, \; \psi(x_3) = w_3^{\sigma_1^2} x_1x_2x_1x_2^{-1}x_1^{-1} (w_3^{-1})^{\sigma_1^2}.
\]
We see that $h(w_1^{\sigma_1^2})+h(w_3^{\sigma_1^2} x_1x_2) = h(w_1) + h(w_3) +1$ is odd, and so $\beta \sigma_1^2$ satisfies the coprime condition of Theorem \ref{T:CoprimeToOrderpreserving} and so is order-preserving.  When the underlying permutation is $(2 \; 3)$, we similarly prove that $\beta$ or $\beta\sigma_1^2$ is order-preserving. 

Last, if the underlying permutation of $\beta$ is a $3$-cycle, then the braid $\beta \sigma_1$ has underlying permutation a single transposition. Then $(\beta \sigma_1)\sigma_1^k$ is order-preserving by the previous paragraphs, for some $k$ with $0 \leq k \leq 2$.
\end{proof}

\end{proposition}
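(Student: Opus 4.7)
The plan is to proceed by case analysis on the underlying permutation $\tau \in S_3$ of $\beta$. In each case I will exhibit an appropriate power of $\sigma_1$ so that Theorem \ref{T:CoprimeToOrderpreserving} (or the trivial fact that pure braids are order-preserving) applies to $\beta\sigma_1^k$.

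Two cases are immediate. If $\tau$ is the identity then $\beta$ is pure, every orbit of $\tau$ is a singleton, and the coprime condition of Theorem \ref{T:CoprimeToOrderpreserving} holds vacuously for any choice of $i_0$, so $k=0$ suffices. If $\tau = (1\;2)$, then $\beta\sigma_1$ is pure, giving $k=1$.

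The main work is the two cases $\tau \in \{(1\;3),(2\;3)\}$. In each, I would let $i_0$ be the unique fixed point of $\tau$ and let $h : F_3 \to \mathbb Z$ be the corresponding exponent-sum homomorphism. The single non-trivial orbit $\mathcal O$ has size $2$, so the coprime hypothesis of Theorem \ref{T:CoprimeToOrderpreserving} reduces to requiring that $\sum_{i\in\mathcal O}h(w_i)$ be odd. If it is, then $k=0$ works; otherwise I would verify that passing from $\beta$ to $\beta\sigma_1^2$ flips this parity, so that $k=2$ works. This rests on two small checks with the Artin action. First, $h$ is invariant under the automorphism of $F_3$ induced by $\sigma_1^2$: for either choice of $i_0$ one verifies directly that $x_1\mapsto x_1x_2x_1x_2^{-1}x_1^{-1}$, $x_2\mapsto x_1x_2x_1^{-1}$, $x_3\mapsto x_3$ each have the same $h$-value as the corresponding generator, so $h(w_i^{\sigma_1^2})=h(w_i)$. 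Second, writing the new automorphism as $\psi(x_k) = w_k^{\sigma_1^2}\cdot x_{k^\tau}^{\sigma_1^2}\cdot(w_k^{\sigma_1^2})^{-1}$ and repackaging as a conjugate of $x_{k^\tau}$, exactly one of the two new conjugators $u_i$ ($i\in\mathcal O$) picks up a prefix of $h$-value $1$ (and the other none), because $x_1^{\sigma_1^2}$ begins with $x_1x_2$ and $x_2^{\sigma_1^2}$ begins with $x_1$, while $x_3^{\sigma_1^2}=x_3$.

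Finally, if $\tau$ is a $3$-cycle, a direct check shows $\beta\sigma_1$ has underlying permutation $(2\;3)$ (if $\tau=(1\;2\;3)$) or $(1\;3)$ (if $\tau=(1\;3\;2)$). Applying the transposition case to $\beta\sigma_1$ yields some $k'\in\{0,2\}$ so that $\beta\sigma_1\cdot\sigma_1^{k'}=\beta\sigma_1^{k'+1}$ is order-preserving, and $k=k'+1\in\{1,3\}$ lies in the required range. The only real obstacle is the bookkeeping in the transposition step --- keeping straight which conjugator picks up the extra prefix --- but this reduces to the explicit computation of $x_j^{\sigma_1^2}$ for $j=1,2,3$ and presents no conceptual difficulty.
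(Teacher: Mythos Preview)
Your argument is correct and follows essentially the same route as the paper's proof: the same case split on the underlying permutation, the same use of the fixed point $i_0$ and parity of $\sum_{i\in\mathcal O}h(w_i)$ in the transposition cases, the same multiplication by $\sigma_1^2$ to flip the parity, and the same reduction of the $3$-cycle case to the transposition case via $\beta\sigma_1$. Your treatment is in fact slightly sharper in the final case, since you observe that $\beta\sigma_1$ always lands in $\{(1\;3),(2\;3)\}$ (never $(1\;2)$), so only $k'\in\{0,2\}$ is needed there.
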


We can deal with $n \geq 4$ by a similar, general argument.

\begin{lemma} 
\label{permutation lemma}
    Suppose that $\sigma \in S_n$ is a permutation with $n^\sigma \neq n$ where $n \geq 4$.  Then there exists $\tau \in S_{n-1}$ such that $\sigma \tau$ has disjoint cycle decomposition $c_1 c_2$, where $c_1 = (i)$ for some $i \in \{1, \dots, n-2\}$ and $c_2$ is an $(n-1)$-cycle.
\end{lemma}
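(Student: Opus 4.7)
The plan is to reduce the problem to constructing a permutation $\rho$ of the desired form ``by hand'' and then reading off the required $\tau$ from the equation $\sigma\tau=\rho$. Recall that the convention is that in the product $\sigma\tau$, we apply $\sigma$ first, so that $k^{\sigma\tau}=(k^\sigma)^\tau$, and that $\tau \in S_{n-1}$ is interpreted as a permutation in $S_n$ fixing $n$.

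First, I would set $m = n^{\sigma^{-1}}$ and observe that $m \neq n$: the fixed points of $\sigma$ and $\sigma^{-1}$ coincide, and by hypothesis $n$ is not fixed by $\sigma$. If I can exhibit a permutation $\rho \in S_n$ whose disjoint cycle decomposition is $(i)\, c_2$ with $i \in \{1, \dots, n-2\}$ and $c_2$ an $(n-1)$-cycle, and which additionally satisfies $m^\rho = n$, then defining $\tau := \sigma^{-1}\rho$ automatically gives $\sigma\tau = \rho$ and
\[
n^\tau = n^{\sigma^{-1}\rho} = m^\rho = n,
\]
so $\tau$ lies in $S_{n-1}$ and $\sigma\tau$ has the required shape.

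So the task reduces to choosing $\rho$. I would pick the fixed point $i$ first: since $m \neq n$, I only need $i \in \{1,\dots,n-2\}\setminus\{m\}$, which is nonempty because $n\geq 4$ (the set has at least $n-3 \geq 1$ elements if $m\leq n-2$, and all $n-2\geq 2$ elements otherwise). Then I would build an arbitrary $(n-1)$-cycle $c_2$ on the complement $\{1,\dots,n\}\setminus\{i\}$ that sends $m$ to $n$; concretely, if $b_1,\dots,b_{n-3}$ is any enumeration of $\{1,\dots,n\}\setminus\{i,m,n\}$, one can take $c_2 = (m\; n\; b_1\; b_2\; \cdots\; b_{n-3})$, which has the correct length $n-1$ and sends $m\mapsto n$ by construction.

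There is really no obstacle of any depth here; the only thing to be careful about is the composition convention and the bookkeeping that guarantees $\{1,\dots,n-2\}\setminus\{m\}$ is nonempty, which is exactly where the hypothesis $n\geq 4$ is used. Everything else is a direct verification: $\rho$ has the required cycle type by construction, and $\tau = \sigma^{-1}\rho$ fixes $n$ by the calculation above, hence $\tau \in S_{n-1}$ as required.
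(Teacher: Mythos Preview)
Your proof is correct and follows essentially the same approach as the paper's: both set $\tau=\sigma^{-1}\rho$ (the paper writes $\rho=c_1c_2$), identify the preimage $m=n^{\sigma^{-1}}$, and construct $c_2$ as an $(n-1)$-cycle beginning $(m\;n\;\cdots)$ with the fixed point chosen from $\{1,\dots,n-2\}\setminus\{m\}$. Your verification that this last set is nonempty for $n\ge 4$ is slightly more explicit than the paper's, but the argument is the same.
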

\begin{proof}
To complete the proof, we need to find two disjoint cycles $c_1$ and $c_2$ with $c_1=(i)$ for $1\leq i<n-2$ and $c_2$ a $(n-1)$-cycle in $S_n$, such that $\tau:=\sigma^{-1}c_1c_2$ fixes $n$. Assume that $n^{\sigma^{-1}}=j\neq n$. Arrange the elements of the set $\{1,2,\dotsc,n\}\backslash\{ j, n\}$ in any order $i_1,i_2,\dotsc,i_{n-2}$ with $i_{n-2} \neq n-1$. Now $c_1=(i_{n-2})$ and $c_2=(j \; n  \;  i_1 \; i_2 \; \dotsc \; i_{n-3})$ are the desired cycles.
\end{proof}

\begin{proposition}
    If $n \geq 3$ and $\beta \in B_n$, then there exists $\alpha \in B_{n-1}$ such that $\beta\alpha $ is order-preserving. 
\end{proposition}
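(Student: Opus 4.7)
The plan is to reduce to Theorem \ref{T:CoprimeToOrderpreserving} by first arranging a convenient cycle structure for the underlying permutation $\sigma$ of $\beta$, and then fine-tuning with pure braids lying in $B_{n-1}$. The case $n = 3$ is already handled by Proposition \ref{P:B3TimsElementsInB2}, so we may assume $n \geq 4$. When $n^\sigma = n$, the permutation $\sigma$ lies in the copy of $S_{n-1}$ inside $S_n$ fixing $n$; picking $\alpha \in B_{n-1}$ to be any lift of $\sigma^{-1}$ then makes $\beta\alpha$ a pure braid, whose Artin-action permutation on $F_n$ has only singleton orbits, so the coprimality hypothesis of Theorem \ref{T:CoprimeToOrderpreserving} is vacuous and $\beta\alpha$ is order-preserving.

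When $n^\sigma \neq n$, I apply Lemma \ref{permutation lemma} to obtain $\tau_0 \in S_{n-1}$ with $\sigma\tau_0 = (i_0) \, c_2$, where $i_0 \in \{1, \dots, n-2\}$ and $c_2$ is an $(n-1)$-cycle. Lift $\tau_0$ to $\hat\tau_0 \in B_{n-1}$, and define $h : F_n \to \mathbb{Z}$ by $h(x_{i_0}) = 1$ and $h(x_j) = 0$ otherwise. Writing the automorphism induced by $\beta\hat\tau_0$ as $x_k \mapsto w_k x_{k^{\sigma\tau_0}} w_k^{-1}$, the only non-trivial orbit of $\sigma\tau_0$ is $c_2$, and the remaining task is to arrange that $S := \sum_{k \in c_2} h(w_k)$ is coprime to $n - 1$ by further right-multiplying by a pure braid in $B_{n-1}$.

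To achieve this I prove an analogue of Lemma \ref{L:changehc} for pure braids $A_{p,q}$ with $1 \leq p < q \leq n-1$. Lemma \ref{L:Aijcomputation} implies that such a multiplication replaces each conjugator $w_k$ by $u_k = w_k^{A_{p,q}} \, v_{k^{\sigma\tau_0}}$ for the standard conjugators $v_j$ of Lemma \ref{L:Aijcomputation}; since $h(x_m^{A_{p,q}}) = h(x_m)$ for every $m$, one has $h(u_k) = h(w_k) + h(v_{k^{\sigma\tau_0}})$. A direct inspection of the $v_j$ shows that $\sum_{j \in c_2} h(v_j)$ equals $1$ when $i_0 \in \{p, q\}$ and $0$ otherwise. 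Choosing $\{p, q\} = \{i_0, q'\}$ for any $q' \in \{1, \dots, n-1\} \setminus \{i_0\}$ (which is nonempty because $i_0 \leq n - 2$) therefore shifts $S$ by exactly $1$. Iterating produces $\alpha' \in PB_{n-1}$ making the coprimality condition hold at $i_0$, and Theorem \ref{T:CoprimeToOrderpreserving} gives the conclusion for $\alpha = \hat\tau_0 \alpha' \in B_{n-1}$.

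The main obstacle is the explicit $h$-value bookkeeping needed to establish the shift formula for $S$ under $A_{p,q}$ within $B_{n-1}$; once that calculation is in place, the bound $i_0 \leq n - 2$ coming from Lemma \ref{permutation lemma} guarantees the auxiliary index $q'$ we need to stay inside $B_{n-1}$, and the rest is routine.
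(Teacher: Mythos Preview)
Your proof is correct and follows essentially the same route as the paper: handle $n=3$ via Proposition~\ref{P:B3TimsElementsInB2}, purify when $n^\sigma=n$, and otherwise use Lemma~\ref{permutation lemma} to reduce to a fixed point $i_0\leq n-2$ plus one $(n-1)$-cycle, then repeatedly right-multiply by a pure braid in $B_{n-1}$ to shift $h_{c_2}$ by $1$ until it is coprime to $n-1$. The only difference is cosmetic: the paper uses the specific pure braid $\sigma_{i_0}^2=A_{i_0,i_0+1}$ (which lies in $B_{n-1}$ precisely because $i_0\leq n-2$), whereas you allow any $A_{p,q}$ with $\{p,q\}\subset\{1,\dots,n-1\}$ and $i_0\in\{p,q\}$; your case analysis of $\sum_{j\in c_2}h(v_j)$ recovers the same ``$+1$'' shift in either case.
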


\begin{proof}
The case $n=3$ is given by Proposition \ref{P:B3TimsElementsInB2}, thus we assume $n \geq 4$.  Suppose that $\beta$ has underlying permutation $\sigma$ and that $\sigma(n) = n$.  Then we can choose $\alpha \in B_{n-1}$ such that the underlying permutation of $\beta \alpha$ is trivial.  Therefore $\beta \alpha$ is a pure braid, and so is order-preserving.

Now consider the case that  $\sigma(n) \neq n$. Choose $\tau \in S_{n-1}$ such that the cycle decomposition of $\sigma \tau$ is $c_1c_2$ where $c_1 = (i_0)$ with $i_0 < n-1$ and $c_2$ is an $(n-1)$-cycle, which we can do by Lemma \ref{permutation lemma}.  Choose a braid $\alpha \in B_{n-1}$ having underlying permutation $\tau$.

Suppose that $\beta \alpha$ yields an automorphism $\phi :F_n \rightarrow F_n$ given by 
\[ \phi(x_i) = w_i x_{j} w_i^{-1}
\]
where $j = i^{\sigma \tau}$.  Define $h:F_n \rightarrow \mathbb{Z}$ by $h(x_{i_0}) = 1$ and $h(x_i) = 0$ for all $i\neq i_0$ and set $h_{c_2} = \sum_{i \neq i_0} h(w_i)$.  If $\gcd(h_{c_2}, n-1) = 1$ then $\beta \alpha$ is order-preserving by Theorem \ref{T:CoprimeToOrderpreserving}.  

If $\gcd(h_{c_2}, n-1) \neq 1$, then consider the product $\beta \alpha \sigma_{i_0}^2$.  This braid gives rise to an automorphism $\psi :F_n \rightarrow F_n$ given by 
\[ \psi(x_i) = v_i x_{j} v_i^{-1}
\]
where $j = i^{\sigma \tau}$, as the underlying permutation is unchanged.  We compute:

\begin{displaymath}
   (w_i x_{j} w_i^{-1})^{\sigma_{i_0}^2} = \left\{
     \begin{array}{ll}
         (w_i)^{\sigma_{i_0}^2} x_{i_0}x_{i_0+1}x_{i_0}x_{i_0+1}^{-1}x_{i_0}^{-1} (w_i^{-1})^{\sigma_{i_0}^2}& \mbox{ if $j = i_0$} \\
       (w_i)^{\sigma_{i_0}^2} x_{i_0} x_{i_0+1}x_{i_0}^{-1}(w_i^{-1})^{\sigma_{i_0}^2} & \mbox{ if $j = i_0+1$} \\
        (w_i)^{\sigma_{i_0}^2} x_{j} (w_i^{-1})^{\sigma_{i_0}^2} & \mbox{ if $j \neq i_0, i_0+1$.}
     \end{array}
   \right.
    \end{displaymath} 
Consequently $v_i = w_i^{\sigma_1^2}$ whenever $i^{\sigma \tau} \neq i_0, i_0+1$
and $v_i = w_i^{\sigma_{i_0}^2}x_{i_0}$ when $i^{\sigma \tau} = i_0+1$.  Therefore we can compute the quantity $h'_{c_2} = \sum_{i\neq i_0}^n h(v_i)$ by observing that $h_{c_2}' = 1+ \sum_{i \neq i_0}^n h(w_i^{\sigma_1^2}) = 1+ h_{c_2}$, where the final equality follows from observing that $h(w_i^{\sigma_1^2}) = h(w_i)$ for all $i$.  It follows that if  $\gcd(h'_{c_2}, n-1) = 1$, then $\beta \alpha \sigma_1^2$ is order-preserving by Theorem \ref{T:CoprimeToOrderpreserving}.

If $\gcd(h'_{c_2}, n-1) \neq 1$, we may continue inductively to add powers of $\sigma_{i_0}^2$, until we arrive at a braid $\beta \alpha \sigma_{i_0}^{2 \ell}$ which satisfies the coprime condition of Theorem \ref{T:CoprimeToOrderpreserving}, and is therefore order-preserving.
\end{proof}

\bibliography{order_preserving}

\bibliographystyle{plain}

\end{document}